\theoremstyle{plain}
\newcommand{\beq}{\begin{equation}}
\newcommand{\eeq}{\end{equation}}
\newcommand{\bea}{\begin{eqnarray}}
\newcommand{\eea}{\end{eqnarray}}
\newcommand{\beas}{\begin{eqnarray*}}
\newcommand{\eeas}{\end{eqnarray*}}
\newtheorem{theorem}{Theorem}[section]
\newtheorem{definition}[theorem]{Definition}
\newtheorem{proposition}[theorem]{Proposition}
\newtheorem{corollary}[theorem]{Corollary}
\newtheorem{remark}[theorem]{Remark}
\newtheorem{example}[theorem]{Example}
\newtheorem{examples}[theorem]{Examples}
\newtheorem{foo}[theorem]{Remarks}
\newtheorem{assumption}[theorem]{Hypothesis}
\newcommand{\bG}{\mathbb G}
\newcommand{\p}{\partial}
\newcommand{\bM}{\mathbb M}
\newcommand{\Rn}{\mathbb R^n}
\newcommand{\di}{\mathfrak h}
\newcommand{\M}{\mathbb M}
\newcommand{\R}{\mathbb R}
\newcommand{\Ri}{\operatorname{Ric}}
\newcommand{\ve}{\varepsilon}
\title[Curvature-dimension inequalities, etc.]{Curvature-dimension inequalities and Li-Yau inequalities in sub-Riemannian spaces}
\author{Nicola Garofalo}
\address{Department of Mathematics\\
The Ohio State University\\ 
100 Math Tower\\ 231 West 18th Avenue\\ Columbus, OH 43210-1174} 
\email[Nicola
Garofalo]{rembrandt54@gmail.com}
\thanks{The work discussed in this paper was supported in part by NSF Grant DMS-1001317}
\dedicatory{Dedicated to Andrei Agrachev, on the occasion of his 60th birthday}
\begin{document}

\maketitle

\tableofcontents

\begin{abstract}
In this paper we present a survey of the joint program with Fabrice Baudoin originated with the paper \cite{BG1}, and continued with the works \cite{BG2}, \cite{BBG}, \cite{BG3} and \cite{BBGM}, joint with Baudoin, Michel Bonnefont and Isidro Munive.
\end{abstract}

\section{Introduction}\label{S:intro}
One of the most exciting aspects of Riemannian geometry consists in
the beautiful interplay between global topological and
geometric properties of the ambient manifold and properties
of solutions of those natural pde's such as the Laplace-Beltrami
operator $\Delta$, with its associated heat semigroup $P_t f(x) = e^{t\Delta} f(x)$.
In their 1986 Acta Mathematica paper \cite{LY} Li and Yau established their celebrated inequalities. Let us just focus on the one concerned with Ric $\ge 0$.

\begin{theorem}[The Li-Yau parabolic gradient estimate]\label{T:LYR} Suppose that $M$ is a complete, connected $n$-dimensional Riemannian manifold such that
$\Ri\ge 0$.
Then, for any $f\ge 0$ which solves the heat equation $\Delta f - f_t = 0$ on $M$ one has for $u = \ln f$,
\begin{equation}\label{LY}
|\nabla u|^2 - u_t \le \frac{n}{2t}.
\end{equation}
\end{theorem}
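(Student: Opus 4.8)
The plan is to follow the maximum-principle argument of Li and Yau, reducing \eqref{LY} to a pointwise parabolic differential inequality for the single quantity $-\Delta u$. First I would pass to the logarithm $u = \ln f$. Since $f>0$ solves $\Delta f = f_t$, and $f=e^u$ gives $\Delta f = e^u(\abs{\nabla u}^2 + \Delta u)$ while $f_t = e^u u_t$, the function $u$ satisfies the nonlinear equation
\[
u_t = \Delta u + \abs{\nabla u}^2 .
\]
In particular the quantity to be estimated is exactly $w := \abs{\nabla u}^2 - u_t = -\Delta u$, so the assertion \eqref{LY} becomes the bound $t\,w \le \tfrac n2$.

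The next step is to manufacture a parabolic differential inequality for $w$. Applying $\Delta-\partial_t$ to $w=-\Delta u$ and using the equation for $u$ (so that $\Delta u_t = \Delta^2 u + \Delta\abs{\nabla u}^2$), a short computation gives $(\Delta-\partial_t)w = \Delta\abs{\nabla u}^2$. I would then invoke the Bochner formula
\[
\Delta \abs{\nabla u}^2 = 2\abs{\nabla^2 u}^2 + 2\langle \nabla u, \nabla \Delta u\rangle + 2\Ri(\nabla u, \nabla u),
\]
discard the curvature term using the hypothesis $\Ri \ge 0$, and bound the Hessian from below by Cauchy--Schwarz, $\abs{\nabla^2 u}^2 \ge (\Delta u)^2/n = w^2/n$. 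Since $\nabla \Delta u = -\nabla w$, this produces the key inequality
\[
(\Delta - \partial_t)w + 2\langle \nabla u, \nabla w\rangle \ge \frac{2}{n}\,w^2 .
\]

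With this in hand I would study the parabolically rescaled quantity $G := t\,w$. Using $\Delta G = t\Delta w$, $\partial_t G = w + t w_t$ and $\nabla G = t\nabla w$, the displayed inequality yields
\[
(\Delta - \partial_t)G + 2\langle \nabla u, \nabla G\rangle = t\big[(\Delta-\partial_t)w + 2\langle\nabla u, \nabla w\rangle\big] - w \ge \frac{1}{t}\Big(\frac{2}{n}G^2 - G\Big).
\]
On a compact $M$ the conclusion is now immediate from the parabolic maximum principle: at a point where $G$ attains a positive maximum on $M\times(0,T]$ one has $\nabla G = 0$, $\Delta G \le 0$ and $\partial_t G \ge 0$, so the left-hand side is $\le 0$, forcing $\tfrac{2}{n}G^2 - G \le 0$ and hence $G \le \tfrac n2$. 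As $T$ is arbitrary this is $t\,w \le \tfrac n2$ everywhere.

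The main obstacle is the passage to a complete, possibly noncompact, $M$, where $G$ need not attain its maximum. Here I would localize: fix $x_0$, choose a cutoff $\varphi$ supported in a geodesic ball $B(x_0,2R)$ with $\varphi\equiv 1$ on $B(x_0,R)$, apply the maximum principle to $\varphi G$, and absorb the error terms generated by $\Delta\varphi$ and $\abs{\nabla\varphi}^2$ via Young's inequality. This is where the geometry re-enters: the Laplacian comparison theorem under $\Ri\ge 0$ gives $\Delta r \le (n-1)/r$, allowing $\varphi$ to be chosen so the errors are $O(1/R)$ and vanish as $R\to\infty$. A preliminary reduction handles regularity and sign: if $f\ge 0$ only, I would replace $f$ by $f+\ve$, run the argument, and let $\ve\downarrow 0$. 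Combining the localized estimate with $R\to\infty$ and $\ve\to 0$ recovers \eqref{LY}.
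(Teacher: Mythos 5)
The paper does not actually prove Theorem \ref{T:LYR}: it quotes it from \cite{LY} and only records that the original argument rests on the Bochner identity \eqref{b} and the Laplacian comparison theorem \eqref{lapR}. Your proposal is a reconstruction of exactly that argument, and most of it is correct: the reduction to $w=-\Delta u$, the computation $(\Delta-\partial_t)w=\Delta|\nabla u|^2$, the use of Bochner plus $|\nabla^2u|^2\ge(\Delta u)^2/n$ to get $(\Delta-\partial_t)w+2\langle\nabla u,\nabla w\rangle\ge \tfrac 2n w^2$, and the maximum-principle step for $G=tw$ on compact $M$ are all sound.

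There is, however, a genuine gap in the localization step for noncompact $M$. At a maximum of $\varphi G$ the troublesome term is not $G\Delta\varphi$ or $G|\nabla\varphi|^2/\varphi$ but the cross term $2G\langle\nabla u,\nabla\varphi\rangle$ coming from $2\langle\nabla u,\nabla(\varphi G)\rangle$ after substituting $\varphi\nabla G=-G\nabla\varphi$. To absorb it you need control of $|\nabla u|$, and the quantity $w=|\nabla u|^2-u_t$ does not bound $|\nabla u|^2$ (the term $u_t$ can be large and positive). This is precisely why Li and Yau do not localize $t(|\nabla u|^2-u_t)$ directly: they work with $F_\alpha=t(|\nabla u|^2-\alpha u_t)$ for $\alpha>1$, so that in the quadratic term $\tfrac 2n(|\nabla u|^2-u_t)^2$ one can split off a positive multiple $(1-\tfrac1\alpha)|\nabla u|^2$ which absorbs $2G|\nabla u||\nabla\varphi|$ by Young's inequality; one then lets $R\to\infty$ first and $\alpha\downarrow 1$ afterwards to recover the sharp constant $n/(2t)$. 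Your sketch as written, which only mentions absorbing the $\Delta\varphi$ and $|\nabla\varphi|^2$ errors, would not close. (The rest of your reductions --- replacing $f$ by $f+\varepsilon$ and using $\Ri\ge 0$ via the comparison $\Delta r\le (n-1)/r$ to build the cutoff --- are the standard and correct ingredients, and they match the two tools the paper attributes to the original proof.)
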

The motivation for \eqref{LY} comes from considering the case when $M$ is flat $\Rn$ and $f(x,t) = (4\pi t)^{-\frac{n}{2}} \exp(-\frac{|x|^2}{4t})$ is the fundamental solution of the heat equation. In such case $u = \ln f$ is easily seen to satisfy 
\[
|\nabla u|^2 - u_t \equiv \frac{n}{2t}.
\]
Understanding the ``$\le$'' in \eqref{LY} requires a deeper analysis of the role played by curvature. 
Integration of the  the Li-Yau inequality \eqref{LY} along a geodesic path joining $(y,t)$ to $(x,s)$, where $x, y\in M$ and $0<s<t$, gives the following fundamental result.  
\begin{theorem}[The Li-Yau Harnack inequality]\label{T:harnackR}
Let $M$ be a complete connected $n$-dimensional Riemannian manifold having \emph{Ric}$\ \ge 0$. Let $f\ge 0$ be a solution of the heat equation on $M$.
For any $x, y \in M$, $0<s<t<\infty$, one has
\[
f(x,s) \le f(y,t) \left(\frac{t}{s}\right)^{\frac{n}{2}} \exp\left(\frac{d(x,y)^2}{4(t-s)}\right).
\]
\end{theorem}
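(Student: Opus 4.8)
The plan is to integrate the pointwise estimate \eqref{LY} along a space-time path and then exponentiate. First I would reduce to a logarithmic statement: since $f\ge 0$ solves the heat equation on a connected manifold, the strong maximum principle forces either $f\equiv 0$ (in which case the claimed inequality is trivial) or $f>0$ everywhere, so that $u=\ln f$ is well defined and, by Theorem \ref{T:LYR}, satisfies $u_t\ge |\nabla u|^2-\frac{n}{2\tau}$. Taking logarithms of the desired Harnack inequality, it suffices to prove
\[
u(x,s)-u(y,t)\le \frac{d(x,y)^2}{4(t-s)}+\frac{n}{2}\ln\frac{t}{s}.
\]

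Next I would fix a minimizing, constant-speed geodesic $\gamma:[s,t]\to M$ with $\gamma(s)=x$ and $\gamma(t)=y$; its existence is guaranteed by the completeness of $M$ via Hopf--Rinow, and along it $|\dot\gamma|\equiv d(x,y)/(t-s)$. Setting $\phi(\tau)=u(\gamma(\tau),\tau)$ and applying the fundamental theorem of calculus gives
\[
u(x,s)-u(y,t)=-\int_s^t \phi'(\tau)\,d\tau=-\int_s^t\big(\langle \nabla u,\dot\gamma\rangle+u_t\big)\,d\tau,
\]
where the integrand is evaluated at $(\gamma(\tau),\tau)$.

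The heart of the argument is to bound this integrand from above using \eqref{LY}. I would substitute $-u_t\le -|\nabla u|^2+\frac{n}{2\tau}$ and then absorb the cross term with Young's inequality $-\langle\nabla u,\dot\gamma\rangle\le |\nabla u|\,|\dot\gamma|\le |\nabla u|^2+\tfrac14|\dot\gamma|^2$, so that the two $|\nabla u|^2$ contributions cancel exactly and the integrand is controlled by $\tfrac14|\dot\gamma|^2+\tfrac{n}{2\tau}$. Integrating and using $\int_s^t|\dot\gamma|^2\,d\tau=d(x,y)^2/(t-s)$ together with $\int_s^t \frac{d\tau}{\tau}=\ln(t/s)$ yields the displayed bound, and exponentiation gives the theorem.

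The one genuinely delicate point is the precise weighting in Young's inequality: the coefficient must be chosen so that the positive $|\nabla u|^2$ produced by the cross term cancels the $-|\nabla u|^2$ coming from \eqref{LY}. This is exactly what fixes the constant $1/4$ in front of $d(x,y)^2/(t-s)$ and reproduces the sharp on-diagonal Gaussian behavior; any other weight would either fail to cancel or give a suboptimal constant. A secondary technical matter is justifying the differentiation of $u$ along $\gamma$ and the validity of the pointwise estimate everywhere along the path, which rests on the smoothness of positive solutions of the heat equation and, globally, on completeness to secure the minimizing geodesic.
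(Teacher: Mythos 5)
Your proposal is correct and follows essentially the same route the paper indicates: integrating the Li--Yau estimate \eqref{LY} along a space-time geodesic path and absorbing the cross term $\langle\nabla u,\dot\gamma\rangle$ via Young's inequality, which is precisely the argument the paper carries out in detail for the sub-Riemannian analogue in Theorem \ref{T:harnack} (there with a free parameter $\epsilon$ optimized at the end, whereas you fix the weight from the start --- an equivalent choice). The computation of the constants $\frac{1}{4(t-s)}$ and $\frac{n}{2}\ln(t/s)$ checks out.
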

Theorem \ref{T:harnackR} extends to Riemannian manifolds  with $\Ri\ge 0$ the Harnack inequality for the heat equation independently discovered by B. Pini in \cite{P} and J. Hadamard in \cite{Ha}.
Theorems \ref{T:LYR}  and \ref{T:harnackR} provide remarkable evidence of how the geometry of
the manifold is intimately connected to the properties of its
Laplacian and the associated heat flow. In fact, once Theorem \ref{T:harnackR} is available one can obtain many fundamental results, such as Liouville type theorems, on and off-diagonal Gaussian upper bounds for the heat kernel, Sobolev and isoperimetric inequalities, etc.

Another beautiful global result which connects the geometry to the topology of $M$ is the Bonnet-Myers theorem which states that if for some $\rho_1>0$,  Ric$ \ge (n-1) \rho_1$, then $M$ with its Riemannian metric is compact, with a finite
fundamental group, and diam$(M) \le \frac{\pi}{\sqrt{\rho_1}}$.

\subsubsection{The identity of Bochner and the role of Jacobi fields}

The original proof of Li and Yau of Theorem \ref{T:LYR} hinges on two basic tools from Riemannian geometry:
\begin{itemize}
\item[(i)] the Bochner identity
\begin{equation}\label{b}
\Delta(|\nabla f|^2) = 2 \|\nabla^2 f\|^2 + 2 <\nabla f,\nabla(\Delta f)> + 2 \Ri(\nabla f,\nabla f),
\end{equation}
which holds for any $f\in C^3(M)$; 

\vskip 0.2in 

\item[(ii)] the Laplacian comparison theorem. When  $\Ri \ge 0$ the latter states that  the geodesic distance on $M$ satisfies the following differential inequality outside the cut-locus of a fixed base point (and in the sense of distributions on $M$)
\begin{equation}\label{lapR}
\Delta \rho_M(x) \le  \frac{n-1}{\rho_M(x)}.
\end{equation}
\end{itemize} 
As it is well-known, the Laplacian comparison theorem (like other  comparison theorems in Riemannian geometry, or like the Bonnet-Myers theorem) uses in an essential way the existence of a rich supply of Jacobi fields. 

This paper is devoted to surveying  a joint program with Fabrice Baudoin originated with the paper \cite{BG1}, and continued with the works \cite{BG2}, \cite{BBG}, \cite{BG3} and \cite{BBGM}. It is worth emphasizing that our approach allows for the first time to extend the Li-Yau program, and many of its fundamental consequences, to situations which are genuinely non-Riemannian. 
The original motivation in \cite{BG1} was generalizing global results such as Theorems  \ref{T:LYR}  and \ref{T:harnackR} above, or the topological Bonnet-Myers theorem, to smooth manifolds in which the governing operator is no longer the Laplace-Beltrami operator, but rather a smooth locally subelliptic operator $L$. These operators are typically never elliptic and their natural geometric framework is that of sub-Riemannian manifolds. Such manifolds are a generalization of Riemannian ones and they constitute the appropriate setting for describing phenomena with a constrained dynamic, in which only certain directions in the tangent space are allowed.

We close this introduction by mentioning that, in their interesting preprint \cite{AL},  Agrachev and Lee have used a notion of Ricci tensor, denoted by $\mathfrak{Ric}$, which was introduced by the first author in \cite{A}. They study three-dimensional contact manifolds and, under the assumption that the manifold be Sasakian, they prove that a lower bound on $\mathfrak{Ric}$ implies the so-called measure-contraction property. In particular, when $\mathfrak{Ric} \ge 0$, then the manifold $\M$ satisfies a global volume growth similar to the Riemannian Bishop-Gromov theorem. An analysis shows that, interestingly, our notion of Ricci tensor coincides, up to a scaling factor, with theirs. 

We also mention that for three-dimensional contact manifolds, the sub-Riemannian geometric  invariants were computed by Hughen in his unpublished Ph.D. dissertation, see \cite{Hu}. In particular, with his notations, the  CR Sasakian structure corresponds to the case $a_1^2+a_2^2=0$ and, up to a scaling factor, his $K$ is the Tanaka-Webster Ricci curvature. In such respect, the Bonnet-Myers type theorem obtained  by Hughen (Proposition 3.5 in \cite{Hu})  is the exact analogue (with a better constant) of our Theorem \ref{T:BM}, applied to the case of three-dimensional  Sasakian manifolds. Finally, it must be mentioned that a Bonnet-Myers type theorem on general three-dimensional CR manifolds was first obtained by Rumin in \cite{rumin}. The methods of Rumin and Hughen are close as they both rely on the analysis of the second-variation formula for sub-Riemannian geodesics.

\medskip

\noindent \textbf{Acknowledgment:} I would like to thank Ugo Boscain, Mario Sigalotti, Andrey Sarychev, Davide Barilari and Dario Prandi for their gracious invitation to speak at the \emph{INDAM meeting on 
Geometric Control and 
sub-Riemannian Geometry}, held at the Palazzone in Cortona, May 21-25, 2012. I am very grateful for having been offered this opportunity to honor Andrei Agrachev.

\section{From Riemannian to sub-Riemannian geometry}

A fundamental property of the Laplace-Beltrami operator is \emph{ellipticity}. As we have just said, in sub-Riemannian geometry the relevant partial differential operators, the sub-Laplacians, fail to be elliptic. The moment one gives up coercivity (i.e., control of all directions in the tangent space), new interesting phenomena arise. For instance, the exponential mapping fails to be a local diffeomorphism, and geodesics are no longer locally unique. A rich theory of Jacobi fields is (at least presently) not available and, consequently, results such as the Laplacian comparison theorem, the Bonnet-Myers theorem, or Theorems \ref{T:LYR}  and \ref{T:harnackR} seemed to be completely out of reach. Furthermore, it was not clear what one means by ``Ricci curvature''.

The paper \cite{BG1} took a different approach to these questions, based on a new \emph{curvature-dimension inequality} and a systematic use of the heat semigroup. Besides the Riemannian case, the program in \cite{BG1} presently covers sub-Riemannian spaces of rank two, such as, for instance, Carnot groups of step two, CR manifolds, etc. This is  the first genuinely non-Riemannian setting in which a good notion of \emph{Ricci curvature} has been introduced, and we feel it is important to be emphasize that the Riemannian approach has been so far mostly unsuccessful to cover the large classes of examples encompassed by \cite{BG1}.

In this connection we stress that, even in the Riemannian framework, the ideas in \cite{BG1} provide a new and simplified account of the Li-Yau program based on tools which are purely analytical and avoid the use of results which are preeminently based on the theory of Jacobi fields, such as, e.g., the Laplacian or the volume comparison theorem, see \cite{BG2}.

\section{The curvature-dimension inequality CD$(\rho,n)$ and the Ricci tensor}\label{S:cdr}

Recall that a Riemannian manifold $M$ with Laplacian $\Delta$ is said to satisfy the Bakry-Emery \emph{curvature-dimension inequality} CD$(\rho,n)$ if
\begin{equation}\label{cd}
\Gamma_2(f) \ge \frac 1n (\Delta f)^2 + \rho \Gamma(f),\ \ \ \ \forall f\in C^\infty(M).
\end{equation}
Here, 
\begin{equation}\label{gammas}
\Gamma(f) = \frac 12\left\{\Delta(f^2) - 2 f \Delta f\right\} = |\nabla f|^2,\ \ \ \Gamma_2(f) = \frac 12\left\{\Delta(\Gamma(f)) - 2 \Gamma(f, \Delta f)\right\}.
\end{equation}
Using Bochner's identity \eqref{b} and Newton's inequality, it is easy to see that if Ric$\ \ge \rho$, then CD$(\rho,n)$ holds. 
It is remarkable that the curvature dimension inequality \eqref{cd} perfectly  captures the
notion of Ricci lower bound. It was in fact proved by Bakry in Proposition 6.2 in \cite{B} that: \emph{on a $n$-dimensional Riemannian manifold $\bM$ the inequality \emph{CD}$(\rho,n)$ implies} Ric $\ge \rho$. In conclusion, 
\begin{equation}\label{cdR}
\text{Ric}\ge \rho \Longleftrightarrow \text{CD}(\rho,n).
\end{equation}

This equivalence \eqref{cdR} was the motivation behind the work \cite{BG1}, whose setup we now describe.

We consider a smooth, connected manifold $\bM$ endowed with a smooth measure $\mu$ and a smooth second-order diffusion operator $L$ with real coefficients, satisfying $L1=0$, and which is symmetric with respect to $\mu$ and non-positive. By this we mean that 
\begin{equation}\label{sa}
\int_\bM f L g d\mu=\int_\bM g Lf d\mu,\ \ \ \ \ \ \int_\bM f L f d\mu \le 0,
\end{equation}
for every $f , g \in C^ \infty_0(\bM)$. We make the technical assumption that $L$ be locally subelliptic in the sense of \cite{FP}, and associate with $L$ the following symmetric, first-order, differential bilinear form: 
\begin{equation}\label{gamma}
\Gamma(f,g) =\frac{1}{2}\left\{L(fg)-fLg-gLf\right\}, \quad f,g \in C^\infty(\bM).
\end{equation}
The expression $\Gamma(f) = \Gamma(f,f)$ is known as \textit{le carr\'e du champ}, see \eqref{gammas}
 above. There is a canonical distance associated with the operator $L$:
\begin{equation}\label{di}
d(x,y)=\sup \left\{ |f(x) -f(y) | \mid f \in  C^\infty(\bM) , \| \Gamma(f) \|_\infty \le 1 \right\},\ \ \  \ x,y \in \bM,
\end{equation}
where for a function $g$ on $\bM$ we have let $||g||_\infty = \underset{\bM}{\text{ess} \sup} |g|$.  A tangent vector $v\in T_x\M$ is called \emph{subunit} for $L$ at $x$ if   
$v = \sum_{i=1}^m a_i X_i(x)$, with $\sum_{i=1}^m a_i^2 \le 1$, see \cite{FP}. A Lipschitz path $\gamma:[0,T]\to \M$ is called subunit for $L$ if $\gamma'(t)$ is subunit for $L$ at $\gamma(t)$ for a.e. $t\in [0,T]$. We then define the subunit length of $\gamma$ as $\ell_s(\gamma) = T$. Given $x, y\in \M$, we indicate with 
\[
S(x,y) =\{\gamma:[0,T]\to \M\mid \gamma\ \text{is subunit for}\ L, \gamma(0) = x,\ \gamma(T) = y\}.
\]
In this paper we make the assumption that 
\begin{equation}\label{ls}
S(x,y) \not= \varnothing,\ \ \ \ \text{for every}\ x, y\in \M.
\end{equation}
Under such hypothesis one verifies that
\begin{equation}\label{ds}
d_s(x,y) = \inf\{\ell_s(\gamma)\mid \gamma\in S(x,y)\},
\end{equation}
defines a true distance on $\M$, and that furthermore, 
\[
d(x,y) = d_s(x,y),\ \ \ x, y\in \mathbb M.
\]
It follows that one can work indifferently with either one of the distances $d$ in \eqref{di}, 
or $d_s$ in \eqref{ds}. 

Throughout this paper we assume that the metric space $(\M,d)$ be complete.

We also suppose that $\bM$ is equipped with a symmetric, first-order differential bilinear form $\Gamma^Z:C^\infty(\bM)\times C^\infty(\bM) \to \R$, satisfying
\[
\Gamma^Z(fg,h) = f\Gamma^Z(g,h) + g \Gamma^Z(f,h).
\]
We assume that $\Gamma^Z(f) = \Gamma^Z(f,f) \ge 0$ (one should notice that $\Gamma^Z(1) = 0$). 

Given the sub-Laplacian $L$ and the first-order bilinear form $\Gamma^Z$ on $\bM$, we now introduce the following second-order differential forms:
\begin{equation}\label{gamma2}
\Gamma_{2}(f,g) = \frac{1}{2}\big\{L\Gamma(f,g) - \Gamma(f,
Lg)-\Gamma (g,Lf)\big\},
\end{equation}
\begin{equation}\label{gamma2Z}
\Gamma^Z_{2}(f,g) = \frac{1}{2}\big\{L\Gamma^Z (f,g) - \Gamma^Z(f,
Lg)-\Gamma^Z (g,Lf)\big\}.
\end{equation}
Observe that if $\Gamma^Z\equiv 0$, then $\Gamma^Z_2 \equiv 0$ as well. As for $\Gamma$ and $\Gamma^Z$, we will use the notations  $\Gamma_2(f) = \Gamma_2(f,f)$, $\Gamma_2^Z(f) = \Gamma^Z_2(f,f)$.


We are ready to introduce the central character of our program, a generalization of the above mentioned curvature-dimension inequality \eqref{cdR}.

\begin{definition}\label{D:cdi}
We say that $\M$ satisfies the \emph{generalized curvature-dimension inequality} \emph{CD}$(\rho_1,\rho_2,\kappa,d)$ with respect to $L$ and $\Gamma^Z$ if there exist constants $\rho_1 \in \mathbb{R}$, $\rho_2 >0$, $\kappa \ge 0$, and $0<d\le \infty$ such that the inequality 
\begin{equation}\label{cdi}
\Gamma_2(f) +\nu \Gamma_2^Z(f) \ge \frac{1}{d} (Lf)^2 +\left( \rho_1 -\frac{\kappa}{\nu} \right) \Gamma(f) +\rho_2 \Gamma^Z(f)
\end{equation}
 hold for every  $f\in C^\infty(\bM)$ and every $\nu>0$.
\end{definition}

It is worth observing explicitly that if in Definition \ref{D:cdi} we choose $L = \Delta$, $\Gamma^Z \equiv 0$,  $d = n=$ dim($\bM$), $\rho_1 = \rho$ and $\kappa = 0$, we obtain the Riemannian curvature-dimension inequality CD$(\rho,n)$ in \eqref{cdR} above. Thus, the case of Riemannian manifolds is trivially encompassed by Definition \ref{D:cdi}. 
We also remark that, changing $\Gamma^Z$ into $a \Gamma^Z$, where $a>0$, changes the inequality  CD$(\rho_1,\rho_2,\kappa,d)$ into CD$(\rho_1,a \rho_2, a \kappa,d)$. We express this fact by saying that the quantity $\frac{\kappa}{\rho_2}$ is intrinsic. 
Hereafter, when we say that  $\M$ satisfies the curvature dimension inequality CD$(\rho_1,\rho_2,\kappa,d)$ (with respect to $L$ and $\Gamma^Z$), we will routinely avoid repeating at each occurrence the sentence ``for some $\rho_2>0$, $\kappa\ge 0$ and $d >0$''. Instead, we will explicitly mention whether $\rho_1 = 0$, or $>0$, or simply $\rho_1\in \R$. 
The reason for this is that the parameter $\rho_1$ in the inequality \eqref{cdi} has a special relevance since, in the geometric examples in \cite{BG1}, it represents the lower bound on a sub-Riemannian generalization of the Ricci tensor. Thus, $\rho_1 = 0$ is, in our framework, the counterpart of  the Riemannian Ric\ $\ge 0$, whereas when $\rho_1 >0$ $(<0)$, we are dealing with the counterpart of the case Ric\ $>0$ (Ric  bounded from below by a negative constant).

In addition to \eqref{cdi} we will work with three general assumptions: they will be listed as Hypothesis \ref{A:exhaustion}, \ref{A:main_assumption} and \ref{A:regularity}. 

\begin{assumption}\label{A:exhaustion}
There exists an increasing
sequence $h_k\in C^\infty_0(\bM)$   such that $h_k\nearrow 1$ on
$\bM$, and \[
||\Gamma (h_k)||_{\infty} +||\Gamma^Z (h_k)||_{\infty}  \to 0,\ \ \text{as} \ k\to \infty.
\]
\end{assumption}
We will also assume that the following commutation relation be satisfied.

\begin{assumption}\label{A:main_assumption} 
For any $f \in C^\infty(\bM)$ one has
\[
\Gamma(f, \Gamma^Z(f))=\Gamma^Z( f, \Gamma(f)).
\]
\end{assumption}

When $\bM$ is a Riemannian manifold, $\mu$ is the Riemannian volume on $\bM$, and $L = \Delta$, then $d(x,y)$ in \eqref{di} above is equal to the Riemannian distance on $\bM$. In this situation if we take $\Gamma^Z\equiv 0$, then Hypothesis \ref{A:exhaustion}, \ref{A:main_assumption} are fulfilled. In fact, Hypothesis \ref{A:main_assumption} is trivially satisfied, whereas Hypothesis \ref{A:exhaustion} is equivalent to assuming that $(\bM,d)$ be a complete metric space, which we are assuming. 

Before proceeding with the discussion, we pause to stress that, in the generality in which we work the bilinear differential form $\Gamma^Z$,  unlike $\Gamma$, is not a priori canonical. Whereas $\Gamma$ is determined once $L$ is assigned, the form $\Gamma^Z$ in general is not intrinsically associated with $L$. However, in the geometric examples described in section 2 of the paper \cite{BG1} the choice of $\Gamma^Z$ is canonical, as is the case, for instance, for CR Sasakian manifolds. The reader should think of $\Gamma^Z$ as an orthogonal complement of $\Gamma$: the bilinear form $\Gamma$ represents the square of the length of the gradient in the horizontal directions, whereas $\Gamma^Z$ represents the square of the length of the gradient along the vertical directions. 


We will also need the following assumption which is necessary to rigorously justify the computations in \cite{BG1} on functionals of the heat semigroup. Hereafter, we will denote  by $P_t = e^{tL}$ the semigroup generated by the diffusion operator $L$. 
\begin{assumption}\label{A:regularity}
The semigroup $P_t$ is stochastically complete that is, for $t \ge 0$, $P_t 1=1$ and  for every $f \in C_0^\infty(\bM)$ and $T \ge 0$, one has 
\[
\sup_{t \in [0,T]} \| \Gamma(P_t f)  \|_{ \infty}+\| \Gamma^Z(P_t f) \|_{ \infty} < +\infty.
\]

\end{assumption}

In the Riemannian setting ($L = \Delta$ and $\Gamma^Z \equiv 0$), Hypothesis \ref{A:regularity}, is satisfied if one assumes the lower bound Ricci$\ \ge \rho$, for some $\rho \in \R$. This can be derived from the paper by Yau \cite{Yau2} and Bakry's note \cite{bakry-CRAS}. It thus follows that, in the Riemannian case, the Hypothesis \ref{A:regularity} is not needed since it can be derived as a consequence of the curvature-dimension inequality CD$(\rho,n)$ in \eqref{cdR} above. More generally, it is proved in \cite{BG1} that a similar situation occurs in every sub-Riemannian manifold with transverse symmetries  of Yang-Mills type, for the relevant definitions see \cite{BG1}. In that paper it is shown that, in such framework, the Hypothesis \ref{A:regularity} is not needed since it follows (in a non-trivial way) from the generalized curvature-dimension inequality CD$(\rho_1,\rho_2,\kappa,d)$ in Definition \ref{D:cdi} above.  


The above discussion prompts us to underline the distinctive aspect of the theory developed in the papers \cite{BG1},  \cite{BG3}, \cite{BBG} and \cite{BBGM}: \emph{for the class of complete sub-Riemannian manifolds with transverse symmetries of Yang-Mills type studied in \cite{BG1}, all the results are solely deduced from the curvature-dimension inequality \emph{CD}$(\rho_1,\rho_2,\kappa,d)$ in \eqref{cdi}}.

\section{Li-Yau type estimates}

In this section, we discuss a generalization of the celebrated Li-Yau inequality in
\cite{LY} to the heat semigroup associated with the subelliptic
operator $L$. We mention that, in this setting, related
inequalities were obtained by Cao-Yau \cite{Cao-Yau}. However, these
authors work locally and the geometry of the manifold does not enter in their study.
Instead, the analysis in \cite{BG1} in based on some entropic inequalities which are derived from the curvature-dimension inequality \eqref{cdi} above. We have mentioned in the introduction that, even when specialized to the Riemannian case, the ideas in this section provide a new, more elementary approach to the Li-Yau inequalities. For this aspect we refer the reader to the paper \cite{BG3}.

\begin{theorem}[sub-Riemannian Li-Yau gradient estimate]\label{T:ge}
Assume that the curvature-dimension inequality \eqref{cdi} be satisfied for $\rho_1\in \R$, and that the  Hypothesis  \ref{A:exhaustion},  \ref{A:main_assumption}, \ref{A:regularity}  hold. Let  $f \in C_0^\infty(\M)$, $f  \ge 0$, $f \not\equiv 0$, then the following inequality holds for $t>0$:
\[
\Gamma (\ln P_t f) +\frac{2 \rho_2}{3}  t \Gamma^Z (\ln P_t f) \le
\left(1+\frac{3\kappa}{2\rho_2}-\frac{2\rho_1}{3} t\right)
\frac{LP_t f}{P_t f} +\frac{d\rho_1^2}{6} t-\frac{\rho_1 d}{2}\left(
1+\frac{3\kappa}{2\rho_2}\right) +\frac{d\left(
1+\frac{3\kappa}{2\rho_2}\right)^2}{2t}.
\]
\end{theorem}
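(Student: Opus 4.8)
The plan is to prove the sub-Riemannian Li-Yau estimate of Theorem~\ref{T:ge} by the now-classical semigroup interpolation method, adapted to the two-form framework $(\Gamma,\Gamma^Z)$. The key object is an auxiliary functional along the heat flow. Fix $f\in C_0^\infty(\M)$, $f\ge 0$, $f\not\equiv 0$, set $P_T f > 0$ for the range of interest, and for $0\le s\le t$ consider the entropy-type functional
\[
\phi(s) = P_s\!\left(\Gamma(\ln P_{t-s} f)\, P_{t-s} f\right),\qquad
\psi(s) = P_s\!\left(\Gamma^Z(\ln P_{t-s} f)\, P_{t-s} f\right).
\]
The heart of the matter is to differentiate $\phi$ and $\psi$ in $s$ and to express the result in terms of $\Gamma_2$ and $\Gamma_2^Z$ of $u = \ln P_{t-s} f$. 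A direct computation using the diffusion property of $L$ and the definitions \eqref{gamma2}, \eqref{gamma2Z} yields $\phi'(s) = 2\, P_s\!\left(\Gamma_2(u)\, P_{t-s} f\right)$ and $\psi'(s) = 2\, P_s\!\left(\Gamma_2^Z(u)\, P_{t-s} f\right)$, modulo the commutation identity of Hypothesis~\ref{A:main_assumption}, which is exactly what guarantees that the cross terms coming from $\Gamma(u,\Gamma^Z(u))$ and $\Gamma^Z(u,\Gamma(u))$ cancel.

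Next I would introduce two weight functions $a(s),b(s)\ge 0$, to be optimized, and study the combined functional
\[
\Phi(s) = a(s)\,\phi(s) + b(s)\,\psi(s).
\]
Applying the generalized curvature-dimension inequality \eqref{cdi} to $u$ with a carefully chosen $\nu = \nu(s)$ (the free parameter in CD$(\rho_1,\rho_2,\kappa,d)$ is precisely what allows one to balance the $\Gamma$ and $\Gamma^Z$ contributions), one bounds $\Phi'(s)$ from below by a quantity involving $(Lu)^2$, $\Gamma(u)$, and $\Gamma^Z(u)$. The term $\frac{1}{d}(Lu)^2$ is handled by the elementary inequality $(Lu)^2 = \left(\frac{LP_{t-s}f}{P_{t-s}f} - \Gamma(u)\right)^2 \ge$ a linear-in-$\Gamma(u)$ lower bound after completing the square, which is where the factor $LP_t f / P_t f$ in the final estimate originates. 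One then chooses $a,b,\nu$ as explicit polynomials in $s$ (here the combination $1+\frac{3\kappa}{2\rho_2}$ and the linear weights $\tfrac{2\rho_2}{3}s$ arise) so that the differential inequality for $\Phi$ closes, i.e. $\Phi'(s)\ge -C(s)\,\Phi(s) - (\text{explicit source})$, or more simply so that an integration in $s$ from $0$ to $t$ becomes tractable.

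Finally, integrating the resulting differential inequality for $\Phi$ over $[0,t]$ and evaluating at the endpoints —  using $\phi(0) = \Gamma(\ln P_t f)\,P_t f$, $\psi(0) = \Gamma^Z(\ln P_t f)\,P_t f$ at $s=0$, and controlling the $s=t$ endpoint where $P_{t-s}f = f$ becomes regular —  then dividing through by $P_t f$ yields the stated pointwise inequality.

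\textbf{The main obstacle} I anticipate is twofold. The first, analytic, difficulty is rigorously justifying the differentiation of $\phi,\psi$ and the convergence of boundary terms: this is exactly the role of Hypothesis~\ref{A:regularity} (stochastic completeness and uniform boundedness of $\Gamma(P_tf)$, $\Gamma^Z(P_tf)$) together with the exhaustion functions $h_k$ of Hypothesis~\ref{A:exhaustion}, which are needed to integrate by parts on a non-compact $\M$ without elliptic regularity. The second, algebraic, difficulty —  and the genuinely delicate point —  is the simultaneous optimization of the three free functions $a(s)$, $b(s)$, and $\nu(s)$ so that the coefficient of the indefinite term $\Gamma^Z(u)$ (which enters \eqref{cdi} only with the \emph{negative} sign $-\kappa/\nu$ on the $\Gamma(u)$ side) is absorbed and the whole expression for $\Phi'(s)$ reduces to a manageable ODE; getting the precise constants $\frac{d\rho_1^2}{6}t$, $\frac{\rho_1 d}{2}(1+\frac{3\kappa}{2\rho_2})$, and $\frac{d(1+\frac{3\kappa}{2\rho_2})^2}{2t}$ in the statement is entirely a matter of winning this optimization, and I expect it to require a judicious ansatz rather than a blind calculation.
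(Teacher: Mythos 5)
Your proposal follows exactly the entropic semigroup-interpolation scheme that the paper attributes to \cite{BG1} (the survey itself states Theorem \ref{T:ge} without proof and defers to that reference): the functionals $P_s\bigl((P_{t-s}f)\,\Gamma(\ln P_{t-s}f)\bigr)$ and $P_s\bigl((P_{t-s}f)\,\Gamma^Z(\ln P_{t-s}f)\bigr)$, the derivative identities $\phi'(s)=2P_s\bigl((P_{t-s}f)\Gamma_2(u)\bigr)$ and $\psi'(s)=2P_s\bigl((P_{t-s}f)\Gamma^Z_2(u)\bigr)$ (the latter relying on Hypothesis \ref{A:main_assumption} to cancel the cross terms), the choice $\nu=b/a$ in \eqref{cdi}, the linearization of $(Lu)^2$ via $Lu=\frac{LP_{t-s}f}{P_{t-s}f}-\Gamma(u)$, and weights with $a(0)=1$, $b(0)=\tfrac{2\rho_2}{3}t$ vanishing at $s=t$. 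This is essentially the same approach, correctly outlined.
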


\begin{remark}\label{R:rho1zero}
We notice that when $\rho_1\ge \rho_1'$, then one trivially has that:
\[
\emph{CD}(\rho_1,\rho_2,\kappa,d)\  \Longrightarrow\  \emph{CD}(\rho_1',\rho_2,\kappa,d).
\]
As a consequence of this observation, when \eqref{cdi} holds with $\rho_1>0$, then also \text{CD}$(0,\rho_2,\kappa,d)$ is true. Therefore, when $\rho_1\ge 0$, Theorem \ref{T:ge} gives in particular for $f\in C_0^\infty(\M)$, $f\ge 0$,
\begin{align}\label{liyaupositifzero}
\Gamma (\ln P_t f) +\frac{2 \rho_2}{3}  t \Gamma^Z (\ln P_t f) \le
\left(1+\frac{3\kappa}{2\rho_2}\right) \frac{LP_t f}{P_t f} +\frac{d\left(
1+\frac{3\kappa}{2\rho_2}\right)^2}{2t}.
\end{align}
However, this inequality is not optimal when $\rho_1>0$. It leads to a optimal Harnack inequality only when
$\rho_1 = 0$. 
\end{remark}

\begin{remark}\label{R:ultrac}
Throughout the remainder of the paper the symbol $D$ will only be used with the following meaning:
 \begin{equation}\label{D} D =  d  \left( 1+
\frac{3\kappa}{2\rho_2}\right). \end{equation}
With this notation, observing that the left-hand side of \eqref{liyaupositifzero} is always nonnegative, and that $LP_t f = \p_t P_t f$, when $\rho_1 \ge 0$ we obtain
\begin{align}\label{ultracontractivity}
\p_t(\ln (t^{D/2} P_t f(x))) \ge 0.
\end{align}
By integrating \eqref{ultracontractivity} from $t<1$ to $1$ leads to the following on-diagonal bound for the heat kernel,
\begin{align}\label{ultracontractivity2}
p (x,x,t) \le \frac{1}{t^{D/2}} p(x,x,1).
\end{align}

The constant $\frac{D}{2}$ in \eqref{ultracontractivity2} is not optimal, in
general, as the example of the heat semigroup on a Carnot group shows. In such case, in fact, one can show that the heat kernel $p(x,y,t)$ is homogeneous
of degree $-\frac{Q}{2}$ with respect to the non-isotropic group dilations,
where $Q$ indicates the corresponding
homogeneous dimension of the group. From such homogeneity of $p(x,y,t)$,
one obtains the estimate
\begin{align*}
p (x,x,t) \le \frac{1}{t^{Q/2}} p(x,x,1),
\end{align*}
which, unlike \eqref{ultracontractivity2}, is best possible.
In the sub-Riemannian setting it does not seem easy to obtain sharp geometric constants by using only
the curvature-dimension inequality \eqref{cdi}. This aspect is quite different
from the Riemannian case.
\end{remark}

\section{The parabolic Harnack inequality for Ricci $\ge 0$}\label{S:harnack}

In this section we discuss a generalization of the celebrated Harnack inequality in
\cite{LY} to solutions of the heat equation $Lu - u_t = 0$
on $\bM$. One
should also see the paper \cite{Cao-Yau}, where the authors deal
with subelliptic operators on a compact manifold. As we have
mentioned, these authors do not obtain bounds which depend 
on the sub-Riemannian geometry of the underlying manifold. Henceforth, we indicate with $C_b^\infty(\M)$ the space $C^\infty(\M)\cap L^\infty(\M)$.

\begin{theorem}\label{T:harnack}
Assume that the curvature-dimension inequality \eqref{cdi} be satisfied for $\rho_1\ge 0$ and that the Hypothesis \ref{A:exhaustion}, \ref{A:main_assumption}, \ref{A:regularity} hold. Given $(x,s), (y,t)\in \bM\times (0,\infty)$, with
$s<t$, one has for any $f\in
C_b^\infty(\bM)$, $f\ge 0$,
\begin{equation}\label{beauty}
P_s f(x) \le P_t f(y) \left(\frac{t}{s}\right)^{\frac{D}{2}} \exp\left(
\frac{D}{d} \frac{d(x,y)^2}{4(t-s)} \right).
\end{equation}
\end{theorem}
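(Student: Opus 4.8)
The plan is to derive the Harnack inequality \eqref{beauty} by integrating the Li-Yau gradient estimate of Theorem \ref{T:ge} (in its $\rho_1\ge 0$ form, Remark \ref{R:rho1zero}) along a path in space-time, exactly as in the classical Riemannian argument that passes from \eqref{LY} to Theorem \ref{T:harnackR}. First I would set $u = \ln P_\tau f$ and record from \eqref{liyaupositifzero}, after discarding the nonnegative vertical term $\frac{2\rho_2}{3}\tau\,\Gamma^Z(\ln P_\tau f)\ge 0$, the pointwise inequality
\begin{equation}\label{plan-grad}
\Gamma(\ln P_\tau f) \le \left(1+\frac{3\kappa}{2\rho_2}\right)\frac{L P_\tau f}{P_\tau f} + \frac{D}{2\tau},
\end{equation}
where I have used the definition \eqref{D} of $D$ to write $d\left(1+\frac{3\kappa}{2\rho_2}\right)=D$. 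Writing $\beta = 1+\frac{3\kappa}{2\rho_2}$ so that $D = d\beta$, and using $L P_\tau f = \partial_\tau P_\tau f$, the quantity $\partial_\tau u = \frac{LP_\tau f}{P_\tau f}$ lets me rewrite \eqref{plan-grad} as the fundamental differential inequality $\Gamma(u) \le \beta\, \partial_\tau u + \frac{D}{2\tau}$.

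Next I would fix the two space-time points $(x,s)$ and $(y,t)$ with $s<t$ and choose a subunit path $\gamma:[s,t]\to\M$ joining $\gamma(s)=x$ to $\gamma(t)=y$, parametrized so that I can interpolate in time. Following the Li-Yau scheme I consider the function $\tau\mapsto u(\gamma(\tau),\tau)$ and differentiate along it, obtaining
\begin{equation}\label{plan-chain}
\frac{d}{d\tau}\,u(\gamma(\tau),\tau) = \ang{\nabla u,\gamma'(\tau)} + \partial_\tau u.
\end{equation}
The horizontal gradient term is controlled by Cauchy-Schwarz: since $\gamma$ is subunit one has $\ang{\nabla u,\gamma'} \ge -\Gamma(u)^{1/2}\,|\gamma'|_s \ge -\Gamma(u)^{1/2}$, and a Young-type estimate then converts $-\Gamma(u)^{1/2}$ into a combination of $-\Gamma(u)$ and a geometric term measuring the speed of $\gamma$. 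Substituting the bound $\Gamma(u)\le \beta\,\partial_\tau u + \frac{D}{2\tau}$ and optimizing the reparametrization of $\gamma$ so that the spatial displacement is spread evenly over $[s,t]$ should produce, after integrating \eqref{plan-chain} from $s$ to $t$, a lower bound for $u(y,t)-u(x,s)$ of the form $-\frac{D}{2}\ln\frac{t}{s} - \frac{\beta}{d}\frac{d(x,y)^2}{4(t-s)}$; exponentiating and recalling $\beta/d = \frac{1}{d}\left(1+\frac{3\kappa}{2\rho_2}\right) = D/d^2\cdot d = D/d$ gives precisely \eqref{beauty}.

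The step I expect to be the main obstacle is the rigorous justification of the path integration in the genuinely sub-Riemannian setting. In the Riemannian case one integrates along a minimizing geodesic and freely uses the chain rule \eqref{plan-chain}; here the distance $d(x,y)$ is the subunit (Carnot-Carathéodory) distance, minimizing curves need not be unique or smooth, and the existence of a connecting subunit path is exactly what Hypothesis \eqref{ls} guarantees but only up to length arbitrarily close to $d(x,y)$. Thus I would work with a near-minimizing subunit path, carry the $\ve$ through the Cauchy-Schwarz estimate, and pass to the limit at the end; care is also needed because $u=\ln P_\tau f$ and its horizontal gradient must be shown to be regular enough along $\gamma$ for \eqref{plan-chain} to hold almost everywhere, which is where Hypothesis \ref{A:regularity} (the uniform bound on $\Gamma(P_\tau f)$ and $\Gamma^Z(P_\tau f)$) and the positivity $P_\tau f>0$ enter. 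Once the integration is set up and these regularity issues are handled, the optimization over the speed profile of $\gamma$ that yields the sharp exponent $\frac{D}{d}\frac{d(x,y)^2}{4(t-s)}$ is a routine one-variable minimization.
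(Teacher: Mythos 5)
Your proposal follows essentially the same route as the paper: discard the vertical term in \eqref{liyaupositifzero} to get $\Gamma(\ln u)\le \frac{D}{d}\,\partial_\tau \ln u+\frac{D}{2\tau}$ (equivalently \eqref{liyaupositif2}), integrate $\frac{d}{d\tau}\ln u$ along a subunit path with linear time interpolation from $(y,t)$ to $(x,s)$, absorb the $\Gamma(\ln u)^{1/2}$ term by a Young/Cauchy--Schwarz estimate with the weight chosen so the $\Gamma$ terms cancel, and then minimize over subunit paths to produce $d(x,y)^2$. The only detail you omit is that Theorem \ref{T:ge} is stated for $f\in C_0^\infty(\M)$, so the paper concludes with an approximation $h_n P_\tau f\to f$ to pass to $f\in C_b^\infty(\M)$; otherwise the argument matches.
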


\begin{proof}
Let $f\in C_0^\infty(\M)$ be as in the statement of the theorem, and for every $(x,t)\in \bM\times
(0,\infty)$ consider $u(x,t) = P_t f(x)$ . Since $Lu = \frac{\p u}{\p t}$, in terms of $u$
the inequality \eqref{liyaupositifzero} can be reformulated as
\[ \Gamma (\ln u) +\frac{2 \rho_2}{3} t \Gamma^Z (\ln u) \le
(1+\frac{3\kappa}{2\rho_2}) \frac{\p \log u}{\p t}  +
\frac{d\left( 1+\frac{3\kappa}{2\rho_2}\right)^2}{2t}.
\]
Recalling \eqref{D}, this implies in particular, 
\begin{equation}\label{liyaupositif2}
- \frac{\p \ln u}{\p t} \le - \frac{d}{D} \Gamma
(\ln u)  +\frac{D}{2t}.
\end{equation}

We now fix two points $(x,s), (y,t)\in \bM\times (0,\infty)$, with
$s<t$. Let $\gamma(\tau)$, $0\le \tau \le T$ be a subunit path such
that $\gamma(0) = y$, $\gamma(T) = x$, and consider the path in $\bM\times
(0,\infty)$ defined by
\[
\alpha(\tau) = \left(\gamma(\tau),t + \frac{s-t}{T}\tau\right),\ \ \
\ 0\le \tau\le T,
\]
so that $\alpha(0) = (y,t)$, $\alpha(T) = (x,s)$. We have
\begin{align*}
\ln \frac{u(x,s)}{u(y,t)}& = \int_0^T \frac{d}{d\tau} \ln
u(\alpha(\tau)) d\tau \\
& \le \int_0^T \left[\Gamma(\ln u(\alpha(\tau)))^{\frac{1}{2}}
- \frac{t-s}{T} \frac{\p \ln u}{\p t}(\alpha(\tau))\right] d \tau.
\end{align*}
Applying \eqref{liyaupositif2} for any $\epsilon >0$ we find
\begin{align*}
\log \frac{u(x,s)}{u(y,t)}& \le T^{\frac{1}{2}} \left(\int_0^T
\Gamma(\ln u)(\alpha(\tau)) d\tau\right)^{\frac{1}{2}} -
\frac{t-s}{T} \int_0^T \frac{\p \ln u}{\p t}(\alpha(\tau)) d \tau
\\
& \le \frac{1}{2\epsilon} T + \frac{\epsilon}{2} \int_0^T \Gamma(\ln
u)(\alpha(\tau)) d\tau - \frac dD \frac{t-s}{T}
\int_0^T \Gamma(\ln u)(\alpha(\tau)) d\tau
\\
&   - \frac{D(s-t)}{2T}
\int_0^T \frac{d\tau}{t + \frac{s-t}{T} \tau}.
\end{align*}
If we now choose $\epsilon >0$ such that
\[
\frac{\epsilon}{2} = \frac dD\frac{t-s}{T},
\]
we obtain from the latter inequality
\[
\log \frac{u(x,s)}{u(y,t)} \le
\frac Dd\frac{\ell_s(\gamma)^2}{4(t-s)}  +
\frac{D}{2}
\ln\left(\frac{t}{s}\right),
\]
where we have denoted by $\ell_s(\gamma)$ the subunitary length of
$\gamma$. If we now minimize over all subunitary paths joining $y$
to $x$, and we exponentiate, we obtain
\[
u(x,s) \le u(y,t) \left(\frac{t}{s}\right)^{\frac{D}{2}} \exp\left(\frac Dd \frac{d(x,y)^2}{4(t-s)} \right).
\]
This proves
\eqref{beauty} when $f \in C_0^\infty(\M)$. We can then extend the result to  $f \in C_b^\infty(\bM)$ by considering the approximations $h_n P_\tau f \in C_0^\infty(\M)$ , where $h_n \in C_0^\infty(\M)$, $h_n \ge 0$, $h_n \underset{n\to \infty}{\to}\ 1$, and let $n \to \infty$ and $\tau \to 0$.

\end{proof}

The following result represents an important consequence of Theorem
\ref{T:harnack}.

\begin{corollary}\label{C:harnackheat}
Suppose that the curvature-dimension inequality \eqref{cdi} be satisfied for $\rho_1\ge 0$, and that the Hypothesis \ref{A:exhaustion}, \ref{A:main_assumption},  \ref{A:regularity} be valid. Let $p(x,y,t)$ be the heat kernel on $\bM$. For every $x,y, z\in
\bM$ and every $0<s<t<\infty$ one has
\[
p(x,y,s) \le p(x,z,t) \left(\frac{t}{s}\right)^{\frac{D}{2}}
\exp\left(\frac{D}{d} \frac{d(y,z)^2}{4(t-s)} \right).
\]
\end{corollary}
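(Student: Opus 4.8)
The plan is to derive Corollary~\ref{C:harnackheat} directly from Theorem~\ref{T:harnack} by applying the latter to a cleverly chosen family of functions, rather than reproving the gradient-estimate machinery. The key observation is that the heat kernel $p(x,y,t)$ is itself, as a function of the pair $(y,t)$, a solution of the heat equation $Lu - u_t = 0$, and that $P_t f(y) = \int_\M p(y,w,t) f(w)\, d\mu(w)$. So the heat kernel inequality should emerge by specializing \eqref{beauty} to approximations of a Dirac mass at the fixed point $x$, and then exploiting the symmetry $p(x,y,t) = p(y,x,t)$ that holds because $L$ is symmetric with respect to $\mu$ (see \eqref{sa}).

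Concretely, I would first fix the point $x\in\bM$ and choose a sequence $f_\delta \in C_b^\infty(\bM)$, $f_\delta \ge 0$, forming an approximate identity concentrating at $x$ as $\delta\to 0$, normalized so that $\int_\M f_\delta\, d\mu = 1$. For each such $f_\delta$, Theorem~\ref{T:harnack} gives
\begin{equation}\label{plan:apply}
P_s f_\delta(y) \le P_t f_\delta(z) \left(\frac{t}{s}\right)^{\frac{D}{2}} \exp\left(\frac{D}{d}\frac{d(y,z)^2}{4(t-s)}\right),
\end{equation}
valid for the fixed pair of space points $y,z$ and times $s<t$. Here I am taking the two base points in \eqref{beauty} to be $y$ and $z$. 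Writing out $P_s f_\delta(y) = \int_\M p(y,w,s) f_\delta(w)\, d\mu(w)$ and similarly for the right-hand side, I would pass to the limit $\delta \to 0$. Since the kernel is smooth in its spatial arguments for $s,t>0$, the integrals converge to $p(y,x,s)$ on the left and $p(z,x,t)$ on the right. Applying the symmetry of the kernel, $p(y,x,s) = p(x,y,s)$ and $p(z,x,t) = p(x,z,t)$, turns \eqref{plan:apply} into exactly the asserted inequality.

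The main obstacle I anticipate is the justification of the limiting procedure and the regularity and symmetry of the kernel, none of which is completely formal in the locally subelliptic setting. One must know that a smooth symmetric heat kernel $p(x,y,t)$ genuinely exists, that for $f\in C_b^\infty$ the semigroup admits the stated integral representation, and that one may interchange limit and integral; the approximate-identity convergence $P_s f_\delta(y) \to p(y,x,s)$ uses continuity of $w\mapsto p(y,w,s)$ together with uniform integrability, both of which follow from hypoellipticity of $L$ and the stochastic completeness built into Hypothesis~\ref{A:regularity}. Care is also needed because $f_\delta$ lies in $C_b^\infty$ rather than $C_0^\infty$, but Theorem~\ref{T:harnack} is stated precisely for $f\in C_b^\infty(\bM)$, so this is not an issue. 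Once these standard but nontrivial analytic facts about the kernel are in hand, the corollary is immediate; the entire content is the correct choice of test functions and the use of self-adjointness to restore the kernel's first argument to its conventional slot.
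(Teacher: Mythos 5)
Your argument is correct and is precisely the route the paper intends: the corollary is stated without proof as a direct consequence of Theorem \ref{T:harnack}, obtained by applying \eqref{beauty} to an approximate identity concentrating at $x$ (or, equivalently, to $f=p(x,\cdot,\epsilon)\in C_b^\infty(\bM)$ and using the semigroup property before letting $\epsilon\to 0$) together with the symmetry $p(x,y,s)=p(y,x,s)$. The technical points you flag --- existence, smoothness and symmetry of the kernel and the interchange of limit and integral --- are standard for locally subelliptic operators and are taken for granted in the paper.
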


\section{Off-diagonal Gaussian upper bounds for Ricci $\ge 0$}\label{S:gaussianub}

Suppose that the assumption of Theorem \ref{T:harnack} are in force. Fix $x\in \bM$ and
$t>0$. Applying Corollary \ref{C:harnackheat} to $(y,t)\to p(x,y,t)$
for every $y\in B(x,\sqrt t)$ we find
\[
p(x,x,t) \le  2^{\frac{D}{2}} e^{\frac{D}{4d}}\ p(x,y,2t) =
C(\rho_2,\kappa,d) p(x,y,2t).
\]
Integration over $B(x,\sqrt t)$ gives
\[
p(x,x,t)\mu(B(x,\sqrt t)) \le C(\rho_2,\kappa,d) \int_{B(x,\sqrt
t)}p(x,y,2t)d\mu(y) \le C(\rho_2,\kappa,d),
\]
where we have used $P_t1\le 1$. This gives the on-diagonal upper
bound 
\begin{equation}\label{odub} 
p(x,x,t) \le \frac{C(\rho_2,\kappa,d)}{\mu(B(x,\sqrt t))}.
\end{equation}

Obtaining an off-diagonal upper bound for the heat kernel requires a more delicate analysis. The relevant result is contained in the following theorem, for whose proof we refer the reader to \cite{BG1}. 

\begin{theorem}\label{T:ub}
Assume that the curvature-dimension inequality \eqref{cdi} be satisfied for $\rho_1\ge 0$ and that the Hypothesis \ref{A:exhaustion}, \ref{A:main_assumption},  \ref{A:regularity} be fulfilled. For any $0<\epsilon <1$
there exists a constant $C(\rho_2,\kappa,d,\epsilon)>0$, which tends
to $\infty$ as $\epsilon \to 0^+$, such that for every $x,y\in \bM$
and $t>0$ one has
\[
p(x,y,t)\le \frac{C(d,\kappa,\rho_2,\epsilon)}{\mu(B(x,\sqrt
t))^{\frac{1}{2}}\mu(B(y,\sqrt t))^{\frac{1}{2}}} \exp
\left(-\frac{d(x,y)^2}{(4+\epsilon)t}\right).
\]
\end{theorem}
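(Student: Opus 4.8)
The plan is to deduce the off-diagonal bound from two ingredients that are essentially already at our disposal: the on-diagonal estimate \eqref{odub}, $p(x,x,t)\le C(\rho_2,\kappa,d)/\mu(B(x,\sqrt t))$, and a Davies--Gaffney type integrated estimate built directly from the Dirichlet form of $L$. The one genuinely sub-Riemannian input that lets the classical Riemannian machinery transfer is the eikonal inequality: for a fixed $x_0\in\M$ the function $\psi=d(x_0,\cdot)$ satisfies $\Gamma(\psi)\le 1$ $\mu$-a.e., which is immediate from the definition \eqref{di} of the distance, since there $\|\Gamma(f)\|_\infty\le 1$ characterizes the $1$-Lipschitz functions. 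No ellipticity of $L$ enters; only the symmetry \eqref{sa}, stochastic completeness (Hypothesis \ref{A:regularity}), and the exhaustion of Hypothesis \ref{A:exhaustion} are used.

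First I would establish the integrated maximum principle. For $f\in C_0^\infty(\M)$, $u=P_tf$, and a Lipschitz weight $\phi=\phi(y,t)$, set $I(t)=\int_\M u^2 e^{2\phi}\,d\mu$. Differentiating and using $u_t=Lu$, the symmetry \eqref{sa} in the form $\int (Lu)\,u\,e^{2\phi}\,d\mu=-\int \Gamma(u,ue^{2\phi})\,d\mu$, the chain rule $\Gamma(u,e^{2\phi})=2e^{2\phi}\Gamma(u,\phi)$, and the Cauchy--Schwarz bound $2\,|u\,\Gamma(u,\phi)|\le \Gamma(u)+u^2\Gamma(\phi)$, one obtains
\[
I'(t)\le 2\int_\M u^2 e^{2\phi}\big(\phi_t+\Gamma(\phi)\big)\,d\mu.
\]
Choosing $\phi(y,t)=-\,d(x_0,y)^2/\big(4(b-t)\big)$ for a parameter $b>t$ and invoking $\Gamma(d(x_0,\cdot))\le 1$ yields $\phi_t+\Gamma(\phi)\le 0$, so that $I$ is non-increasing. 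This is the Davies--Gaffney estimate, a Gaussian-weighted $L^2$ contractivity for $P_t$. The boundary terms in the integration by parts are controlled by inserting the cut-offs $h_k$ of Hypothesis \ref{A:exhaustion} and letting $k\to\infty$, and the computation with the merely Lipschitz weight $\phi$ is justified by a standard mollification.

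Next I would upgrade this to a pointwise statement following Grigor'yan's passage from integrated to pointwise bounds. Using the reproducing property $p(x,y,2t)=\int_\M p(x,z,t)p(z,y,t)\,d\mu(z)$ together with symmetry and Cauchy--Schwarz against $e^{\pm\phi}$,
\[
p(x,y,2t)\le\Big(\int_\M p(x,z,t)^2 e^{2\phi(z)}\,d\mu\Big)^{1/2}\Big(\int_\M p(z,y,t)^2 e^{-2\phi(z)}\,d\mu\Big)^{1/2},
\]
and each factor is controlled by combining the Davies--Gaffney bound (producing the spatial Gaussian weight) with the on-diagonal control $\int_\M p(x,z,t)^2\,d\mu=p(x,x,2t)\le C/\mu(B(x,\sqrt t))$ coming from \eqref{odub}. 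The symmetric normalization $\mu(B(x,\sqrt t))^{-1/2}\mu(B(y,\sqrt t))^{-1/2}$ appears precisely because the two factors are centered at $x$ and $y$; reconciling the scales $t$ and $2t$ uses the volume doubling property, which holds in this setting as a consequence of \eqref{cdi}.

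The hard part will be extracting the near-optimal exponent $1/(4+\epsilon)$ rather than a cruder Gaussian constant, and tracking the blow-up of the constant as $\epsilon\to 0^+$. This demands a careful optimization of the free parameters — the splitting time and the parameter $b$ (equivalently, the slope $\lambda$ if one instead uses the linear weight $\phi=\lambda\,d(x_0,\cdot)$ with correction $\lambda^2 t$, optimized at $\lambda\sim d(x,y)/(2t)$) — balanced against the loss incurred when symmetrizing the volume factors and applying doubling at two scales. It is exactly this interplay that forces the factor $4+\epsilon$ and makes $C(d,\kappa,\rho_2,\epsilon)\to\infty$ as $\epsilon\to 0$; isolating the sharp constant $4$ is not accessible by this purely semigroup-theoretic route, in keeping with the remark following \eqref{cdi} (see Remark \ref{R:ultrac}) that optimal geometric constants are not expected from the curvature-dimension inequality alone.
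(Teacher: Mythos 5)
A preliminary remark: this survey does not actually prove Theorem \ref{T:ub}; it explicitly defers the proof to \cite{BG1}. Measured against the argument given there, your route is essentially the right one: \cite{BG1} likewise obtains the off-diagonal bound by combining the on-diagonal estimate \eqref{odub} with a Gaussian-weighted $L^2$ (integrated maximum principle / Davies--Gaffney) estimate and the semigroup splitting $p(x,y,2t)=\int_\M p(x,z,t)p(z,y,t)\,d\mu(z)$, and the eikonal-type input $\Gamma(\psi)\le 1$ is exactly what allows Grigor'yan's Riemannian scheme to transfer to the subelliptic setting. So there is no divergence of method to report.

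Two points in your sketch do need more care. First, $d(x_0,\cdot)$ is in general not smooth and $\Gamma$ is only defined on $C^\infty(\M)$, so ``$\Gamma(d(x_0,\cdot))\le 1$ a.e.'' is not literally meaningful as stated; the clean fix, consistent with the definition \eqref{di}, is to run the weighted estimate with an arbitrary $f\in C^\infty(\M)$ satisfying $\|\Gamma(f)\|_\infty\le 1$ in place of the distance and take a supremum over such $f$ only at the very end. Second, and more substantively, the step ``each factor is controlled by combining the Davies--Gaffney bound with the on-diagonal control'' does not close as written: the monotonicity of $I$ transports information from an initial time $t_0$ to time $t$, and the on-diagonal bound at time $t_0$ produces $\mu(B(x,\sqrt{t_0}))^{-1}$ rather than $\mu(B(x,\sqrt{t}))^{-1}$, so letting $t_0\to 0$ destroys the estimate. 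One must either run Grigor'yan's iteration over dyadic time scales, for which the needed regularity is the on-diagonal inequality in time $p(x,x,t)\le 2^{D/2}p(x,x,2t)$ --- available from Corollary \ref{C:harnackheat} \emph{before} Theorem \ref{T:ub} --- or invoke volume doubling as you do; the latter is logically admissible (Theorem \ref{T:doubling} does not rely on Theorem \ref{T:ub}) but is established later in the paper, so the Harnack-based time regularity is the preferable crutch. With these repairs your outline does reproduce the proof in \cite{BG1}, including the correct provenance of the exponent $4+\epsilon$ and of the blow-up of $C(\epsilon)$ as $\epsilon\to 0^+$.
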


\section{A sub-Riemannian Bonnet-Myers theorem}\label{S:myer}

Let  $(\mathbb{M},g)$ be a complete, connected Riemannian manifold
of dimension $n\ge 2$. It is well-known that if for some $\rho>0$ the Ricci tensor of
$\mathbb{M}$ satisfies the bound 
\begin{equation}\label{ricbd}
\text{Ric} \geq (n-1) \rho, 
\end{equation} then $\bM$ is compact, with a finite
fundamental group, and diam$(\bM) \le \frac{\pi}{\sqrt{\rho}}$.
This is the celebrated Myer's theorem, which strengthens Bonnet's
theorem.

In what follows we state a sub-Riemannian counterpart of the 
Bonnet-Myer's compactness theorem, see \cite{BG1}. 

\begin{theorem}\label{T:BM}
Assume that the curvature-dimension inequality \eqref{cdi} be satisfied for $\rho_1> 0$, and that the  Hypothesis \ref{A:exhaustion}, \ref{A:main_assumption},  \ref{A:regularity} be valid. Then, the metric space $(\mathbb{M},d)$ is compact and we have \[ \emph{diam}\ \bM \le
2\sqrt{3} \pi \sqrt{
\frac{\rho_2+\kappa}{\rho_1\rho_2} \left(
1+\frac{3\kappa}{2\rho_2}\right)d } .
\]
\end{theorem}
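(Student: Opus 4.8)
The plan is to bound $d(x,y)$ for two arbitrary points $x,y\in\M$ by a quantity that does not depend on $x,y$, and then to upgrade boundedness to compactness using completeness. The engine is the full sub-Riemannian Li--Yau estimate of Theorem \ref{T:ge}, now exploited in the regime $\rho_1>0$, where it is genuinely stronger than the $\rho_1=0$ form \eqref{liyaupositifzero} used for the Harnack inequality.

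First I would normalize by setting $a=1+\tfrac{3\kappa}{2\rho_2}$, take $f\in C_0^\infty(\M)$ with $f\ge 0$, $f\not\equiv 0$, and write $u=P_tf$, so that $\tfrac{LP_tf}{P_tf}=\partial_t\ln u$. Evaluating Theorem \ref{T:ge} at a point $(z,t)$ and retaining the vertical term $\Gamma^Z(\ln u)\ge 0$ on the left (this is what ultimately produces the weight $\tfrac{\rho_2+\kappa}{\rho_2}$ in the final constant) gives a pointwise inequality of the schematic form $\Gamma(\ln u)\le b(t)\,\partial_t\ln u+\Psi(t)$, where $b(t)=a-\tfrac{2\rho_1}{3}t$ and $\Psi(t)=\tfrac{d\rho_1^2}{6}t-\tfrac{\rho_1 d a}{2}+\tfrac{d a^2}{2t}$. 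The decisive structural feature, absent when $\rho_1=0$, is that $b(t)>0$ only on the finite window $0<t<t_\ast:=\tfrac{3a}{2\rho_1}$; on this window I solve for the lower bound $\partial_t\ln u\ge\bigl(\Gamma(\ln u)-\Psi(t)\bigr)/b(t)$.

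Next I would integrate along a space--time path, exactly as in the proof of Theorem \ref{T:harnack}. Using \eqref{ls}--\eqref{ds}, fix a subunit path $\gamma$ nearly realizing $d(x,y)$, of length $\ell$, and let the time coordinate descend along a profile $t(\sigma)$ with endpoints in $(0,t_\ast)$. Since $\langle\nabla\ln u,\gamma'\rangle\le\sqrt{\Gamma(\ln u)}$ along a subunit path, inserting the lower bound on $\partial_t\ln u$ and completing the square in $\sqrt{\Gamma(\ln u)}$ makes the dependence on $u$ disappear pointwise, leaving $\tfrac{d}{d\sigma}\ln u(\gamma(\sigma),t(\sigma))\le \tfrac{b(t)}{4(-t')}+\tfrac{(-t')}{b(t)}\Psi(t)$. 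I would then optimize over the time profile: the extremal profile balances the two terms, and because the admissible window $(0,t_\ast)$ is finite while $t\Psi(t)$ is a positive-definite quadratic in $t$, the optimal value of the resulting integral is computed through an arctangent. This is the mechanism that manufactures the factor $\pi$ (together with $2\sqrt3$), and, once the $\Gamma^Z$ contribution is carried through, the weight $\tfrac{\rho_2+\kappa}{\rho_2}$; the target is a bound on $d(x,y)$ of the asserted form, uniform in $x,y$.

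The main obstacle I anticipate is precisely this extraction. An unoptimized integration produces only a parabolic Harnack-type inequality, which controls a logarithmic ratio but is compatible with arbitrarily large $\ell$ (indeed at the critical time $t_\ast$ one merely recovers a uniform gradient bound $\Gamma(\ln P_{t_\ast}f)\le\Psi(t_\ast)$, which is Lipschitz, not bounding). Turning the estimate into a \emph{finite} bound on $d(x,y)$ hinges on using the finiteness of the positivity window $(0,t_\ast)$ of $b(t)$ in an essential way, so that the extremal time profile forces the admissible spatial excursion to be a pure number times $\sqrt{\tfrac{(\rho_2+\kappa)ad}{\rho_1\rho_2}}$; keeping exact track of the coefficients to land on $2\sqrt3\pi$ is the delicate computational point, and it is here that $\rho_1>0$ is used irreplaceably. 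Once diam$\,\M$ is finite, compactness follows formally: $(\M,d)$ is assumed complete and, by \eqref{ls}--\eqref{ds}, is a length space in which any two points are joined by a rectifiable subunit path, so a complete, locally compact length space of finite diameter is totally bounded, hence compact, by the Hopf--Rinow theorem.
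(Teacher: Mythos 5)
Before addressing your argument, note that this survey does not actually prove Theorem \ref{T:BM}: it states the result and defers to \cite{BG1}. The proof there follows a route entirely different from yours. One first shows that $\rho_1>0$ forces $\mu(\M)<\infty$; one then derives, from the Li--Yau estimate and the resulting ultracontractivity, a global entropy--energy (logarithmic Sobolev) inequality $\mathrm{Ent}_\mu(f^2)\le\Theta(\mathcal E(f))$ with $\Theta$ of logarithmic growth; and one concludes with Bakry's diameter theorem (see \cite{sobolog}), in which the factor $\pi$ is produced by the identity $\int_0^\infty \ln(1+cx)\,x^{-3/2}\,dx=2\pi\sqrt c$. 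So an arctangent does appear, but inside a functional inequality for the invariant measure, not inside a space--time path optimization; the global ingredient (finiteness of $\mu$ plus a tight functional inequality) is exactly what your outline is missing.

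The decisive step of your scheme fails. Carry out your own computation: with $a=1+\tfrac{3\kappa}{2\rho_2}$, $b(t)=a-\tfrac{2\rho_1}{3}t$ and $\Psi(t)=\tfrac{d}{2t}\bigl(\tfrac{\rho_1^2t^2}{3}-\rho_1 a t+a^2\bigr)$, completing the square along a subunit path gives $\tfrac{d}{d\sigma}\ln u\le \tfrac{b(t)}{4|t'|}+\tfrac{|t'|}{b(t)}\Psi(t)$, and by the arithmetic--geometric mean inequality the right-hand side is minimized, at $|t'|=b(t)/(2\sqrt{\Psi(t)})$, with minimal value $\sqrt{\Psi(t)}>0$ (the quadratic $\tfrac{\rho_1^2t^2}{3}-\rho_1at+a^2$ has negative discriminant, so $\Psi>0$ on all of $(0,t_*)$, and $\Psi(t_*)=\tfrac{d\rho_1 a}{12}$ while $b(t_*)=0$). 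Consequently the arc length of the extremal profile is $\ell=\int 2\sqrt{\Psi(t)}/b(t)\,dt$, which diverges logarithmically as the terminal time approaches $t_*$: the finiteness of the window $(0,t_*)$ does \emph{not} cap the admissible spatial excursion, because the optimal profile simply slows down near $t_*$. Every $\ell>0$ is realized by an admissible profile, and what you obtain is a Harnack inequality $\ln\tfrac{u(x,s)}{u(y,t)}\le C(\ell,s,t)$ with a finite, positive right-hand side --- no contradiction arises for large $d(x,y)$. More structurally, a two-point Harnack inequality with finite constants cannot by itself yield compactness (such inequalities hold on plenty of noncompact spaces), so no refinement of the path optimization alone can close the argument. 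Your final reduction (finite diameter, completeness, local compactness and the length-space property imply compactness via Hopf--Rinow) is fine, but the diameter bound itself is not established.
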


 \section{Global volume doubling when Ricci $\ge 0$}\label{SS:volume}

Another fundamental tool in Riemannian geometry is the Bishop-Gromov volume comparison theorem. In what follows, given $\kappa\in \R$, we will indicate with $\M_\kappa$ the space  of constant sectional curvature $\kappa$, and with $V_\kappa(r)$ the volume of the geodesic ball $B_\kappa(r)$ in $\M_\kappa$. Given a Riemannian manifold with measure tensor $\mu$, for $x\in \M$ and $r>0$ we let
\[
V(x,r) = \mu(B(x,r)).
\]
\begin{theorem}[Bishop-Gromov comparison theorem]\label{T:BG}
Let $\M$ be a complete $n$-dimensional Riemannian manifold such that \emph{Ric}\ $\ge \rho$, $\rho \in \R$. Then, for every $x\in \M$ and every $r>0$ the function
\[
r \ \to\ \frac{V(x,r)}{V_{\frac{\rho}{n-1}}(r)}
\]
is non-increasing. 
\end{theorem}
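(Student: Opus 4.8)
The plan is to pass to geodesic polar coordinates centered at the fixed point $x$ and to reduce the claimed monotonicity of the global volume ratio to a purely infinitesimal comparison of the area densities of geodesic spheres in $\M$ and in the model space $\M_K$, where $K := \frac{\rho}{n-1}$ (so that $\Ri(\M_K) \equiv \rho$, making $\M_K$ the extremal case of the hypothesis). I would write the Riemannian volume element as $d\mu = \mathcal A(r,\theta)\,dr\,d\theta$, where $\theta$ ranges over the unit sphere in $T_x\M$, $\gamma_\theta(r) = \exp_x(r\theta)$ is the unit-speed geodesic in direction $\theta$, and $\mathcal A(r,\theta)$ is the Jacobian of $\exp_x$; we set $\mathcal A(r,\theta) := 0$ once $r$ exceeds the cut distance $c(\theta)$. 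The basic identity linking this density to the geometry is $\partial_r \ln \mathcal A(r,\theta) = (\Delta \rho_M)(\gamma_\theta(r))$, where $\rho_M$ is the distance to $x$; the right-hand side is the mean curvature $m(r,\theta)$ of the geodesic sphere.

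First I would establish the Riccati inequality satisfied by $m$. Applying Bochner's identity \eqref{b} to $\rho_M$, and using $|\nabla \rho_M| \equiv 1$ so that the left-hand side vanishes, gives $0 = \|\nabla^2 \rho_M\|^2 + \partial_r(\Delta\rho_M) + \Ri(\nabla\rho_M,\nabla\rho_M)$. Since $\nabla^2\rho_M$ kills the radial direction $\nabla\rho_M$, it acts as a symmetric $(n-1)\times(n-1)$ matrix of trace $m$, whence Newton's inequality yields $\|\nabla^2\rho_M\|^2 \ge \frac{m^2}{n-1}$. Together with the hypothesis $\Ri \ge \rho = (n-1)K$ this produces $m'(r) + \frac{m(r)^2}{n-1} \le -(n-1)K$. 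In $\M_K$ the area density $\mathcal A_K(r)$, which is independent of direction, has mean curvature $m_K(r)$ obeying the same relation with equality. Since $m(r,\theta)$ and $m_K(r)$ share the asymptotics $\tfrac{n-1}{r}$ as $r\to 0^+$, a Riccati (Sturm) comparison argument forces $m(r,\theta)\le m_K(r)$ on $(0,c(\theta))$. Thus $\partial_r\ln\mathcal A(r,\theta)\le \partial_r\ln\mathcal A_K(r)$, so for each fixed $\theta$ the ratio $r\mapsto \mathcal A(r,\theta)/\mathcal A_K(r)$ is non-increasing.

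To reach the statement I would integrate this pointwise comparison twice. Integrating in $\theta$ shows that $r\mapsto \big(\int \mathcal A(r,\theta)\,d\theta\big)/\mathcal A_K(r)$ is non-increasing; then the elementary lemma that $\frac{\int_0^R f}{\int_0^R g}$ is non-increasing whenever $f/g$ is, applied with $f(r)=\int\mathcal A(r,\theta)\,d\theta$ and $g(r)=\mathcal A_K(r)$, yields that $\frac{V(x,R)}{V_K(R)}$ is non-increasing, which is exactly the conclusion (recall that $V_K = V_{\rho/(n-1)}$). The lemma itself follows by differentiating and writing the numerator of the derivative as $\int_0^R g(R)g(r)\,[h(R)-h(r)]\,dr \le 0$, where $h = f/g$.

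The main obstacle is the cut locus, since the Riccati comparison is licensed only along each ray up to $r=c(\theta)$. The point is that extending $\mathcal A(\cdot,\theta)$ by zero past the cut distance keeps the pointwise ratio non-increasing (a positive non-increasing function that drops to $0$, with at most a downward jump, remains non-increasing) and still computes the true volume, because $\exp_x$ is a diffeomorphism from the star-shaped injectivity domain onto $\M$ minus the measure-zero cut locus. Equivalently, one invokes the Laplacian comparison \eqref{lapR} in its distributional form across the cut locus, which is precisely the device that encodes this bookkeeping; this is also the step that genuinely uses the Jacobi-field theory whose absence is the whole difficulty in the sub-Riemannian setting discussed in the rest of the paper.
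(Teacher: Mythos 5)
Your argument is the classical proof of Bishop--Gromov and it is correct: the identity $\partial_r\ln\mathcal A=\Delta\rho_M$ along radial geodesics, the Riccati inequality $m'+\tfrac{m^2}{n-1}\le-(n-1)K$ obtained from Bochner plus Newton's inequality applied to the Hessian of $\rho_M$ restricted to the sphere directions, the Sturm comparison with the model quantity $m_K$, the two integrations (in $\theta$, then in $r$ via the standard ratio-of-integrals lemma), and the extension of $\mathcal A(\cdot,\theta)$ by zero past the cut distance are all handled properly. Note, however, that the paper does not prove this theorem at all: Theorem \ref{T:BG} is quoted as a classical Riemannian background result, and the surrounding discussion makes exactly the opposite methodological point, namely that its doubling consequence \eqref{doubling} can be recovered (Theorem \ref{T:doubling}) from the curvature-dimension inequality and heat-semigroup estimates \emph{without} the Jacobi-field and cut-locus machinery on which your proof rests. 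The trade-off is instructive: your route yields the sharp monotonicity of the full ratio $V(x,r)/V_{\rho/(n-1)}(r)$, which the semigroup approach does not reproduce (the paper's doubling constant $C_1$ is not $2^n$ and its on-diagonal heat-kernel exponent $D/2$ is admitted to be non-optimal), whereas the paper's route survives in sub-Riemannian settings where the exponential map is not a local diffeomorphism and no adequate theory of Jacobi fields is available, so your second step --- the Riccati comparison along minimizing geodesics --- has no analogue. One minor point you may wish to make explicit: for $\rho>0$ the comparison is only meaningful for $r$ up to $\pi\sqrt{(n-1)/\rho}$, where $\mathcal A_K$ vanishes; by Myers' theorem this covers all of $\M$, so nothing is lost.
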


\begin{corollary}\label{C:BG}
Let $\M$ be a complete $n$-dimensional Riemannian manifold with \emph{Ric}\ $\ge 0$. Then, for every $x\in \M$ and every $r>0$ the function
\[
r \ \to\ \frac{V(x,r)}{r^n}
\]
is non-increasing. As a consequence, one has
\begin{equation}\label{doubling}
V(x,2r) \le 2^n V(x,r),\ \ \ \ \ x\in \M,\ r>0,
\end{equation}
and since $\underset{r\to 0^+}{\lim}\ \frac{Vol(B(x,r))}{\omega_n r^n} = 1$, we also have the following \emph{maximum volume growth estimate}
\begin{equation}\label{growth}
V(x,r) \le \omega_n r^n,\ \ \ \ \ x\in \M,\ r>0.
\end{equation}
\end{corollary}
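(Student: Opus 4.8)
The plan is to derive this corollary directly from the Bishop-Gromov comparison theorem (Theorem \ref{T:BG}) by specializing to the case $\rho = 0$. First I would observe that when $\rho = 0$ the comparison parameter $\frac{\rho}{n-1}$ equals $0$, so the model space $\M_0$ is flat Euclidean $\Rn$, and consequently the comparison volume $V_0(r)$ is exactly the Euclidean ball volume $\omega_n r^n$. Invoking Theorem \ref{T:BG} with $\rho = 0$ then yields that the function $r \to \frac{V(x,r)}{\omega_n r^n}$ is non-increasing on $(0,\infty)$. Since $\omega_n$ is a positive constant independent of $r$, the function $r \to \frac{V(x,r)}{r^n}$ inherits the same monotonicity, which is the first assertion of the corollary.

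For the doubling inequality \eqref{doubling}, I would simply evaluate the monotone ratio at the two radii $r$ and $2r$. Monotonicity gives $\frac{V(x,2r)}{(2r)^n} \le \frac{V(x,r)}{r^n}$, and this rearranges immediately to $V(x,2r) \le 2^n V(x,r)$, as desired.

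For the maximum volume growth estimate \eqref{growth}, the key observation is that a non-increasing function on $(0,\infty)$ realizes its supremum as the limit when $r \to 0^+$. Therefore
\[
\frac{V(x,r)}{\omega_n r^n} \le \lim_{s\to 0^+} \frac{V(x,s)}{\omega_n s^n} = 1,
\]
where the last equality is precisely the stated infinitesimal normalization $\lim_{r\to 0^+} \frac{\text{Vol}(B(x,r))}{\omega_n r^n} = 1$; multiplying through by $\omega_n r^n$ then produces $V(x,r) \le \omega_n r^n$ for all $r>0$.

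The argument is entirely elementary once Theorem \ref{T:BG} is available, so I do not expect a serious obstacle. The only points deserving a moment's care are the identification of the $\rho = 0$ comparison model with Euclidean space (so that $V_0(r) = \omega_n r^n$), and the transfer of the small-radius normalization to a \emph{global} upper bound, which relies precisely on the fact that for a non-increasing ratio the supremum is attained in the limit $r \to 0^+$ rather than at any positive radius.
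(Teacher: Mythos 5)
Your proposal is correct and follows exactly the route the paper intends: the corollary is presented as an immediate specialization of Theorem \ref{T:BG} to $\rho=0$, where the comparison volume is $V_0(r)=\omega_n r^n$, so monotonicity of $V(x,r)/r^n$, the doubling bound, and the limit argument for \eqref{growth} all follow as you describe. No gaps.
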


Theorem \ref{T:BG} and Corollary \ref{C:BG} play a pervasive role in the development of analysis on a Riemannian manifold with Ricci $\ge 0$. They are important, among other things, in the study of the spectrum of the Laplacian on a manifold, for establishing Gaussian bounds on the heat kernel, isoperimetric theorems, etc. 

In this section we intend to discuss a sub-Riemannian generalization of the doubling estimate \eqref{doubling}
 in Corollary \ref{C:BG} which has been established in \cite{BBG}, but see also \cite{BG2} for the Riemannian case. Remarkably, our approach shows that an inequality such as \eqref{doubling} above can be exclusively derived from the Bochner identity without a direct use of the theory of Jacobi fields. As a consequence, it provides a very flexible tool for situations in which the tools of Riemannian geometry are not readily available.

We illustrate the main essential point. From the semigroup property and the symmetry of the heat kernel we
have for any $y\in M$ and $t>0$
\[ p(y,y,2t) = \int_M  p(y,z,t)^2 d\mu(z).
\]

Consider now a function $h\in C^\infty_0(M)$ such that $0\le h\le
1$, $h\equiv 1$ on $B(x,\sqrt{t}/2)$ and $h\equiv 0$ outside
$B(x,\sqrt t)$. We thus have
\begin{align*}
P_t h(y) & = \int_M p(y,z,t) h(z) d\mu(z) 
\\
& \le \left(\int_{B(x,\sqrt t)}
p(y,z,t)^2 d\mu(z)\right)^{\frac{1}{2}} \left(\int_M h(z)^2
d\mu(z)\right)^{\frac{1}{2}}
\\
& \le p(y,y,2t)^{\frac{1}{2}} \mu(B(x,\sqrt t))^{\frac{1}{2}}.
\end{align*}

By taking $y=x$, and $t =r^2$ in the latter inequality, we obtain
\begin{equation}\label{ine7}
P_{r^2} \left(\mathbf 1_{B(x,r)}\right)(x)^2 \le P_{r^2} h(x)^2 \leq
p(x,x,2r^2)\ \mu(B(x,r)).
\end{equation}

Applying Corollary \ref{C:harnackheat}
 to $(y,t)\to p(x,y,t)$,
for every $y\in B(x,\sqrt t)$ we find
\[
p(x,x,t) \le  C p(x,y,2t).
\]
Integration in $y\in B(x,\sqrt t)$ gives
\[
p(x,x,t)\mu(B(x,\sqrt t)) \le C \int_{B(x,\sqrt
t)}p(x,y,2t)d\mu(y) \le C,
\]
where we have used $P_t1\le 1$. Letting $t = 4r^2$, we obtain from this the on-diagonal upper
bound \begin{equation}\label{odub} \mu(B(x,2r)) \le
\frac{C}{p(x,x,4r^2)}.
\end{equation}

At this point we combine \eqref{ine7} with \eqref{odub} to obtain
\begin{align}\label{final}
 \mu(B(x,2r)) & \le C\frac{p(x,x,2r^2)}{p(x,x,4r^2)} \frac{ \mu(B(x,r))}{P_{r^2} \left(\mathbf 1_{B(x,r)}\right)(x)^2}
 \\
 & \leq C^*  \frac{ \mu(B(x,r))}{P_{r^2} \left(\mathbf 1_{B(x,r)}\right)(x)^2},
 \notag
\end{align}
for every $x\in M$ and every $r>0$.

It is clear that we would obtain a sub-Riemannian counterpart of \eqref{doubling} if we could show that there exists $A\in (0,1)$, $K>0$, independent of $x\in M$ and $r>0$, such that
\[
P_{Ar^2} \left(\mathbf 1_{B(x,r)}\right)(x) \ge K.
\]
\textbf{Note:} The Harnack inequality in Theorem \ref{T:harnack} gives \begin{equation}\label{PP}
P_{r^2} \left(\mathbf 1_{B(x,r)}\right)(x) \ge C P_{Ar^2} \left(\mathbf 1_{B(x,r)}\right)(x).
\end{equation}

\begin{theorem}\label{T:estimee-P-boule}
Assume that the curvature-dimension inequality \eqref{cdi} be satisfied for $\rho_1\ge 0$ and that the Hypothesis \ref{A:exhaustion}, \ref{A:main_assumption},  \ref{A:regularity} be fulfilled. There exists a universal constant $0<A<1$ such that for every $x \in \mathbb{M}$, and $r>0$,
\[
P_{Ar^2} (\mathbf{1}_{B(x,r) })(x) \ge \frac{1}{2}.
\]
\end{theorem}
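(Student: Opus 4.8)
The plan is to reformulate the statement as a bound on the probability that the diffusion generated by $L$ leaves the ball $B(x,r)$ before time $Ar^2$, and to control this by a second-moment (Chebyshev) argument. Since $P_t$ preserves positivity and $P_t 1 = 1$ by stochastic completeness (Hypothesis \ref{A:regularity}), I have
\[
P_{Ar^2}(\mathbf{1}_{B(x,r)})(x) = 1 - P_{Ar^2}(\mathbf{1}_{B(x,r)^c})(x).
\]
Writing $\phi(y) = d(x,y)$, the pointwise inequality $\mathbf{1}_{B(x,r)^c} \le \phi^2/r^2$ together with the positivity of $P_t$ gives
\[
P_{Ar^2}(\mathbf{1}_{B(x,r)^c})(x) \le \frac{1}{r^2}\, P_{Ar^2}(\phi^2)(x).
\]
Thus the theorem reduces to a \emph{universal second-moment estimate} $P_t(\phi^2)(x) \le C\,t$ for $t>0$, with $C = C(d,\kappa,\rho_2)$ depending only on the structural constants: taking $t = Ar^2$ yields $P_{Ar^2}(\mathbf{1}_{B(x,r)})(x) \ge 1 - CA$, and any $A \le 1/(2C)$ forces the right-hand side to be $\ge 1/2$.

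It remains to prove the second-moment estimate, which is the analytic heart of the matter. I would set $\Phi(t) = P_t(\phi^2)(x) = \int_{\M} \phi(y)^2\, p(x,y,t)\, d\mu(y)$, note $\Phi(0)=0$, and differentiate in $t$. Using the self-adjointness of $L$, stochastic completeness, and the Gaussian decay of $p(x,y,t)$ furnished by Theorem \ref{T:ub} — which guarantees absolute convergence of all the integrals and the vanishing of boundary terms along the exhaustion of Hypothesis \ref{A:exhaustion} — one obtains $\Phi'(t) = P_t(L\phi^2)(x)$. Because $L$ is a diffusion operator, the chain rule gives $L(\phi^2) = 2\phi\, L\phi + 2\,\Gamma(\phi)$, and the eikonal bound $\Gamma(\phi) \le 1$ (immediate from the definition \eqref{di} of $d$, since $\phi$ is $1$-Lipschitz for the subunit distance) reduces the problem to controlling $\int \phi(y)\, L\phi(y)\, p(x,y,t)\, d\mu(y)$.

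The main obstacle is precisely this last term: $\phi\, L\phi$ is a sub-Riemannian \emph{Laplacian comparison} quantity, and the central philosophy of the program forbids invoking the Riemannian comparison theorem, which rests on Jacobi fields unavailable here. I would therefore bypass the differential inequality altogether and estimate $\Phi(t)$ directly: decompose $\M$ into the dyadic annuli $\{2^k\sqrt t \le \phi < 2^{k+1}\sqrt t\}$, $k\ge 0$, and on each apply the off-diagonal Gaussian upper bound of Theorem \ref{T:ub}; there the exponential factor decays like $\exp(-4^k/(4+\epsilon))$ and dominates the polynomial growth $\phi^2 < 4^{k+1}t$, so the resulting series converges and is $O(t)$. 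The delicate point in this route — and the step I expect to be hardest — is that Theorem \ref{T:ub} carries the volume normalization $\mu(B(y,\sqrt t))^{-1/2}$, which must be controlled uniformly as $y$ ranges over the annuli; since the global volume doubling property is a \emph{consequence} of the present theorem (via the chain culminating in \eqref{final}) and is not available as an input, one cannot appeal to doubling. I would instead control these volumes through the Gaussian \emph{lower} bound for $p$ obtained by reading the parabolic Harnack inequality of Corollary \ref{C:harnackheat} backwards, comparing $\mu(B(y,\sqrt t))$ with $\mu(B(x,\sqrt t))$ and absorbing the normalization. Carrying this comparison through uniformly in $k$, $x$, $r$, and $t$, while keeping $C$ dependent only on $d,\kappa,\rho_2$, is the crux of the argument.
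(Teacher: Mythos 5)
First, note that the survey does not actually prove Theorem \ref{T:estimee-P-boule}: it explicitly defers the proof to \cite{BBG} and describes it as ``fairly complicated,'' occupying a large part of that paper. So there is no in-paper argument to compare against; your proposal has to stand on its own, and as written it does not close.

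Your reduction is fine: by $P_t 1 = 1$ and Chebyshev with $\phi(y)=d(x,y)$, everything hinges on a universal second-moment bound $P_t(\phi^2)(x)\le Ct$. You also correctly discard the route through $L(\phi^2)=2\phi L\phi + 2\Gamma(\phi)$, since bounding $\phi L\phi$ is a Laplacian comparison statement that is unavailable here. The genuine gap is in the dyadic-annuli route you fall back on. To sum $\int_{A_k}\phi^2\,p(x,y,t)\,d\mu(y)$ using Theorem \ref{T:ub} you must bound $\mu(B(y,\sqrt t))^{-1/2}$ from above, i.e.\ bound $\mu(B(y,\sqrt t))$ from \emph{below}, uniformly over $y\in A_k$ with at most sub-Gaussian loss in $k$. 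The tool you invoke for this cannot deliver it: Corollary \ref{C:harnackheat} ``read backwards,'' combined with $\int_\M p(y,z,2t)\,d\mu(z)\le 1$, yields only $p(y,y,t)\,\mu(B(y,\sqrt t))\le C$, which is an \emph{upper} bound on the volume (this is exactly \eqref{odub}); it says nothing in the needed direction. The only volume lower bound in the paper is \eqref{PPP}, $p(x,x,2r^2)\ge C/\mu(B(x,r))$, and the paper derives it \emph{from} \eqref{ine7}, \eqref{PP} and Theorem \ref{T:estimee-P-boule} itself — so appealing to it (or to the doubling property, as you already recognize) is circular. Without some a priori volume comparison, the series over annuli simply cannot be summed from the pointwise bound of Theorem \ref{T:ub}: the factor $\mu(B(y,\sqrt t))^{-1/2}$ could in principle be enormous on far annuli. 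This is not a technicality to be ``carried through''; it is the entire content of the theorem, which is why \cite{BBG} devotes a long, independent argument (based on entropy-type functionals for the heat semigroup rather than second moments of the distance) to it. Secondary issues — that $\phi$ is only Lipschitz and $\phi^2$ unbounded, so $\Phi'(t)=P_t(L\phi^2)(x)$ and the convergence of $P_t(\phi^2)$ need justification, and that $\mathbf 1_{B(x,r)}$ must be handled by approximation — are minor by comparison, but the circularity above means the proposal does not yet constitute a proof.
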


The proof of Theorem \ref{T:estimee-P-boule} is fairly complicated and it occupies large part of the work \cite{BBG}. For a much simpler account in the Riemannian setting we refer the reader to \cite{BG2}.
For future reference we record the following consequence of \eqref{ine7}, \eqref{PP}, and Theorem \ref{T:estimee-P-boule},
\begin{equation}\label{PPP}
p(x,x,2r^2) \ge \frac{C}{\mu(B(x,r))},\ \ \ \ \ x\in \M, r>0.
\end{equation}

With Theorem \ref{T:estimee-P-boule} in hands, following the arguments developed above, we obtain the following basic result.

\begin{theorem}[Global doubling property]\label{T:doubling}
Assume that the curvature-dimension inequality \eqref{cdi} be satisfied for $\rho_1\ge  0$, and that the  Hypothesis \ref{A:exhaustion}, \ref{A:main_assumption},  \ref{A:regularity} be valid. Then, the metric measure space $(\mathbb{M}, d , \mu)$ satisfies the global volume doubling property. More precisely, there exists a constant $C_1 = C_1(\rho_1,\rho_2,\kappa,d)>0$ such that for every $x \in \mathbb{M}$ and every $r>0$,
\[
\mu(B(x,2r)) \le C_1 \mu(B(x,r)).
\]
\end{theorem}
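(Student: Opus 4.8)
The plan is to assemble the chain of estimates already prepared in this section, whose only missing link is a uniform lower bound for the heat semigroup acting on the indicator of a ball. The starting point is the reverse volume estimate \eqref{final},
\[
\mu(B(x,2r)) \le C^* \frac{\mu(B(x,r))}{P_{r^2}\left(\mathbf 1_{B(x,r)}\right)(x)^2},
\]
valid for every $x \in \M$ and $r>0$, which was obtained by combining the Cauchy--Schwarz bound \eqref{ine7} with the on-diagonal upper bound \eqref{odub}; both of these rest in turn on the parabolic Harnack inequality of Theorem \ref{T:harnack} (through Corollary \ref{C:harnackheat}). Since $C^*$ depends only on $\rho_1,\rho_2,\kappa,d$, the theorem will follow the moment the denominator $P_{r^2}\left(\mathbf 1_{B(x,r)}\right)(x)^2$ is bounded below by a positive constant that is uniform in $x$ and $r$.

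To produce such a bound I would first invoke Theorem \ref{T:estimee-P-boule}, which supplies a universal $A \in (0,1)$ with $P_{Ar^2}\left(\mathbf 1_{B(x,r)}\right)(x) \ge \tfrac12$ for all $x,r$. This lower bound is stated at the shorter time $Ar^2$, whereas \eqref{final} requires control at time $r^2$; the bridge is exactly the Harnack-type comparison \eqref{PP}, $P_{r^2}\left(\mathbf 1_{B(x,r)}\right)(x) \ge C\, P_{Ar^2}\left(\mathbf 1_{B(x,r)}\right)(x)$, itself a consequence of Theorem \ref{T:harnack}. Chaining the two gives $P_{r^2}\left(\mathbf 1_{B(x,r)}\right)(x) \ge C/2$, hence $P_{r^2}\left(\mathbf 1_{B(x,r)}\right)(x)^2 \ge C^2/4$, a strictly positive constant independent of $x$ and $r$. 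Substituting into \eqref{final} yields
\[
\mu(B(x,2r)) \le \frac{4 C^*}{C^2}\, \mu(B(x,r)),
\]
so the theorem holds with $C_1 = 4C^*/C^2$, depending only on $\rho_1,\rho_2,\kappa,d$ as required.

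As for the main obstacle: every step above except Theorem \ref{T:estimee-P-boule} is a short deduction from the Harnack inequality and Cauchy--Schwarz, and the final assembly is essentially algebraic. The genuine difficulty is concentrated entirely in the uniform mass lower bound of Theorem \ref{T:estimee-P-boule} — the assertion that a definite fraction of the heat emanating from $x$ remains inside $B(x,r)$ up to time $Ar^2$, with $A$ independent of scale and base point. In the Riemannian theory this is where volume comparison would normally enter; here it must instead be extracted solely from the curvature-dimension inequality \eqref{cdi}, which the text signals is the technically heaviest part of \cite{BBG}. Since I am permitted to assume Theorem \ref{T:estimee-P-boule}, the remaining work is the routine chaining described above.
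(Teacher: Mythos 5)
Your proposal is correct and follows exactly the route the paper intends: combine the reverse estimate \eqref{final} with the time-comparison \eqref{PP} and the uniform lower bound of Theorem \ref{T:estimee-P-boule} to bound $P_{r^2}\left(\mathbf 1_{B(x,r)}\right)(x)$ below by a universal constant, which is precisely what the paper means by ``following the arguments developed above.'' You also correctly identify that all the genuine difficulty is concentrated in Theorem \ref{T:estimee-P-boule}, which both you and the paper take as given.
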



\section{Sharp Gaussian bounds, Poincar\'e inequality and parabolic Harnack inequality}\label{S:ogb}

The purpose of this section is to establish some optimal two-sided bounds for the heat kernel $p(x,y,t)$ associated with the subelliptic operator  $L$. Such estimates are reminiscent of those obtained by Li and Yau for complete Riemannian manifolds having Ric $\ge 0$. 
As a consequence of the  two-sided Gaussian bound for the heat kernel, we will derive a global Poincar\'e inequality and a localized parabolic Harnack inequality. 
Here is our main result.

\begin{theorem}\label{T:gb}
Suppose that the curvature-dimension inequality \eqref{cdi} be satisfied for $\rho_1\ge  0$, and that the  Hypothesis \ref{A:exhaustion}, \ref{A:main_assumption},  \ref{A:regularity} be valid. For any $0<\ve <1$
there exists a constant $C(\ve) = C(d,\kappa,\rho_2,\ve)>0$, which tends
to $\infty$ as $\ve \to 0^+$, such that for every $x,y\in \bM$
and $t>0$ one has
\[
\frac{C(\ve)^{-1}}{\mu(B(x,\sqrt
t))} \exp
\left(-\frac{D d(x,y)^2}{d(4-\ve)t}\right)\le p(x,y,t)\le \frac{C(\ve)}{\mu(B(x,\sqrt
t))} \exp
\left(-\frac{d(x,y)^2}{(4+\ve)t}\right).
\]
\end{theorem}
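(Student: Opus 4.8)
The plan is to obtain Theorem \ref{T:gb} as a synthesis of four earlier results: the off-diagonal upper bound of Theorem \ref{T:ub}, the global doubling property of Theorem \ref{T:doubling}, the heat-kernel Harnack inequality of Corollary \ref{C:harnackheat}, and the on-diagonal lower bound \eqref{PPP}. The two inequalities are genuinely independent. The upper bound is essentially Theorem \ref{T:ub} rewritten with a symmetric volume factor, whereas the lower bound comes from a single, carefully optimized application of the Harnack inequality. The only substantive work is to keep the Gaussian constants near the sharp value $4$, which is precisely where the parameter $\ve$ (and the blow-up of $C(\ve)$ as $\ve\to 0^+$) enters.

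For the upper bound I would start from Theorem \ref{T:ub}, which already supplies the correct exponential decay $\exp(-d(x,y)^2/((4+\ve)t))$ but with the geometric-mean volume factor $\mu(B(x,\sqrt t))^{1/2}\mu(B(y,\sqrt t))^{1/2}$. To replace this by $\mu(B(x,\sqrt t))$ I would invoke Theorem \ref{T:doubling}: iterating $\mu(B(x,2r))\le C_1\mu(B(x,r))$ along the inclusion $B(x,\sqrt t)\subset B(y,\sqrt t+d(x,y))$ yields the polynomial comparison
\[
\mu(B(x,\sqrt t)) \le C\Big(1+\tfrac{d(x,y)}{\sqrt t}\Big)^{N}\mu(B(y,\sqrt t)),\qquad N=\log_2 C_1 .
\]
Substituting this into Theorem \ref{T:ub} produces an extra factor $(1+d(x,y)/\sqrt t)^{N/2}$, which is absorbed into the Gaussian via the elementary bound $(1+s)^{N/2}\le C_\eta e^{\eta s^2}$ at the cost of replacing $4+\ve$ by a slightly larger $4+\ve'$. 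Since $\ve'$ can be made as small as one wishes by taking $\ve$ and $\eta$ small, this gives the stated upper bound for every target exponent in $(0,1)$.

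For the lower bound I would use Corollary \ref{C:harnackheat} with the \emph{first} spatial variable held fixed. Applying it to the space-time points $(x,\theta t)$ and $(y,t)$ for a parameter $\theta\in(0,1)$ gives
\[
p(x,x,\theta t)\le p(x,y,t)\,\theta^{-D/2}\exp\Big(\frac{D}{d}\,\frac{d(x,y)^2}{4(1-\theta)t}\Big),
\]
so that, after rearranging and inserting the on-diagonal lower bound \eqref{PPP} together with the trivial monotonicity $\mu(B(x,\sqrt{\theta t/2}))\le\mu(B(x,\sqrt t))$, one obtains
\[
p(x,y,t)\ge \frac{c\,\theta^{D/2}}{\mu(B(x,\sqrt t))}\exp\Big(-\frac{D}{d}\,\frac{d(x,y)^2}{4(1-\theta)t}\Big).
\]
Choosing $\theta=\ve/4$ turns $4(1-\theta)$ into $4-\ve$ and matches the claimed exponent $D d^{-1}d(x,y)^2/((4-\ve)t)$, while the prefactor $c\,\theta^{D/2}=C(\ve)^{-1}$ degenerates as $\ve\to 0^+$, exactly as asserted.

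The step I expect to require the most care is the propagation of the Gaussian constant rather than any single hard estimate, since the difficult analytic inputs (the off-diagonal bound, the doubling property, and the on-diagonal lower bound) are already provided by the earlier theorems. In particular, the near-optimal value $4$ in the lower bound is recovered only because Corollary \ref{C:harnackheat} permits the intermediate time $\theta t$ to be an arbitrarily small fraction of $t$; a naive choice such as $\theta=\tfrac12$ would give the non-sharp constant $2$, whereas letting $\theta\to 0$ restores $4$ at the price of the degenerating constant $C(\ve)$. A final routine observation is that, combined with Theorem \ref{T:doubling}, these two-sided bounds deliver the scale-invariant Poincar\'e inequality and the localized parabolic Harnack inequality through the standard Grigor'yan--Saloff-Coste equivalence.
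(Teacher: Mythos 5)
Your proposal is correct and follows essentially the same route as the paper's own proof: the upper bound is obtained from Theorem \ref{T:ub} together with the doubling property of Theorem \ref{T:doubling} to symmetrize the volume factor and absorb the resulting polynomial term into a slightly worse Gaussian, and the lower bound comes from Corollary \ref{C:harnackheat} applied between $(x,\theta t)$ and $(y,t)$ combined with the on-diagonal lower bound \eqref{PPP} and volume monotonicity. Your choice $\theta=\ve/4$ even makes the exponent $4-\ve$ slightly more explicit than the paper's write-up.
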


\begin{proof}
We begin by establishing the lower bound. First, from Corollary \ref{C:harnackheat}
 we obtain for all $y \in \bM$, $t>0$, and every $0<\ve <1$,
\beas 
p(x,y,t)&\geq& p(x,x,\ve t)  \ve^\frac{D}{2} \exp\left( -\frac{D}{d}\frac{d(x,y)^2}{(4-\ve)t}\right).
\eeas
We thus need to estimate $p(x,x,\ve t)$ from below. But this has already been done in \eqref{PPP}. Choosing $r>0$ such that $2r^2 = \ve t$, we obtain from that estimate
\[
p(x,x,\ve t) \ge \frac{C^*}{\mu(B(x,\sqrt{\ve/2} \sqrt t))},\ \ \ \ \ x\in \bM,\ t>0.
\]
On the other hand, since $\sqrt{\ve/2}<1$, by the trivial inequality $\mu(B(x,\sqrt{\ve/2} \sqrt t)) \le \mu(B(x,\sqrt t))$, we conclude
\[
p(x,y,t) \geq \frac{C^*}{ \mu(B(x,\sqrt t))}  \ve^\frac{D}{2} \exp\left( -\frac{D}{d}\frac{d(x,y)^2}{(4-\ve)t}\right).
\]
This proves the Gaussian lower bound.


For the Gaussian upper bound, we first observe recall that Theorem \ref{T:ub} gives for any $0<\ve'<1$
\begin{equation}\label{pabove}
p(x,y,t)\le \frac{C(d,\kappa,\rho_2,\ve')}{\mu(B(x,\sqrt
t))^{\frac{1}{2}} \mu(B(y,\sqrt
t))^{\frac{1}{2}}} \exp
\left(-\frac{d(x,y)^2}{(4+\ve')t}\right).
\end{equation}
At this point, by the triangle inequality and Theorem \ref{T:doubling} we find with $Q = \log_2 C_1$,
\beas
\mu(B(x,\sqrt{ t})) &\leq& \mu(B(y,d(x,y)+\sqrt{ t}))\\
                 &\leq& C_1 \mu(B(y,\sqrt{ t})) \left(\frac {d(x,y)+\sqrt{ t}}{\sqrt t} \right)^Q.
\eeas
This gives
$$
\frac{1}{\mu(B(y,\sqrt{ t}))}\leq \frac{C_1}{\mu(B(x,\sqrt{ t}))} \left(\frac {d(x,y)}{\sqrt{ t}}+1 \right)^Q.
$$
Combining this with \eqref{pabove} we obtain
\[
p(x,y,t)\le \frac{C_1^{1/2}C(d,\kappa,\rho_2,\ve')}{\mu(B(x,\sqrt
t))}  \left(\frac {d(x,y)}{\sqrt{ t}}+1 \right)^{\frac{Q}{2}} \exp
\left(-\frac{d(x,y)^2}{(4+\ve')t}\right).
\]
If now $0<\ve<1$, it is clear that we can choose $0<\ve'<\ve$ such that 
\[
\frac{C_1^{1/2}C(d,\kappa,\rho_2,\ve')}{\mu(B(x,\sqrt
t))}  \left(\frac {d(x,y)}{\sqrt{ t}}+1 \right)^{\frac{Q}{2}} \exp
\left(-\frac{d(x,y)^2}{(4+\ve')t}\right) \le  \frac{C^*(d,\kappa,\rho_2,\ve)}{\mu(B(x,\sqrt
t))} \exp
\left(-\frac{d(x,y)^2}{(4+\ve)t}\right),
\]
where $C^*(d,\kappa,\rho_2,\ve)$ is a constant which tends to $\infty$ as $\ve \to 0^+$. The desired conclusion follows by suitably adjusting the values of both $\ve'$ and of the constant in the right-hand side of the estimate.

\end{proof}

With Theorems \ref{T:doubling} and \ref{T:gb} in hands, we can now appeal to the results in \cite{FS}, \cite{KS3}, \cite{Gri},  \cite{SC}, \cite{St1}, \cite{St2}, \cite{St3}, see also the books \cite{GSC}, \cite{Gri2}. More precisely, from the developments in these papers it is by now well-known that in the context of strictly regular local Dirichlet spaces we have the equivalence between:

\

\begin{enumerate}
\item[(1)] A two sided Gaussian bounds for the heat kernel (like in Theorem \ref{T:gb});
\item[(2)] The  conjunction of the volume doubling property and the Poincar\'e inequality (see Theorem \ref{T:P} below);
\item[(3)] The parabolic Harnack inequality (see Theorem \ref{T:H} below).
\end{enumerate}

Thus,  thanks to Theorems  \ref{T:doubling} and \ref{T:gb},  we obtain the following form of Poincar\'e inequality.

\begin{theorem}\label{T:P}
Suppose that the curvature-dimension inequality \eqref{cdi} be satisfied for $\rho_1\ge  0$, and that the  Hypothesis \ref{A:exhaustion}, \ref{A:main_assumption},  \ref{A:regularity} be valid. Then, there exists a constant $C = C(d,\kappa,\rho_2)>0$ such that for every $x\in \M, r>0$, and $f\in C^\infty(\M)$ one has
\[
\int_{B(x,r)} |f(y) - f_r|^2 d\mu(y) \le C r^2 \int_{B(x,2r)} \Gamma(f)(y) d\mu(y),
\]
where we have let $f_r = \frac{1}{\mu(B(x,r))} \int_{B(x,r)} f d\mu$.
\end{theorem}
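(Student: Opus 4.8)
The plan is to recognize Theorem~\ref{T:P} as an instance of the by-now classical equivalence, in the theory of strongly local Dirichlet spaces, between two-sided Gaussian heat-kernel bounds, the parabolic Harnack inequality, and the conjunction of volume doubling with the scale-invariant $L^2$ Poincar\'e inequality. Concretely, I would first fix the functional-analytic framework: the operator $L$, being symmetric and non-positive with respect to $\mu$ and locally subelliptic, gives rise to the strongly local, regular Dirichlet form $\mathcal E(f,g) = -\int_\bM f Lg\, d\mu = \int_\bM \Gamma(f,g)\, d\mu$, whose intrinsic distance coincides with the metric $d$ of \eqref{di}--\eqref{ds} (this is exactly the identification $d = d_s$ built into our setup). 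Under the standing completeness assumption on $(\M,d)$, together with the global doubling property of Theorem~\ref{T:doubling}, the metric balls are relatively compact and $(\bM, d, \mu)$ becomes a \emph{strictly regular} local Dirichlet space in the sense of the cited works.

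With this structure in place, the strategy is to feed the two inputs we have already secured into the equivalence machine. Theorem~\ref{T:gb} supplies the full two-sided Gaussian bound for $p(x,y,t)$, and Theorem~\ref{T:doubling} supplies the volume doubling property. By the results of Grigor'yan~\cite{Gri}, Saloff-Coste~\cite{SC}, and Sturm~\cite{St1,St2,St3} (see also Fabes--Stroock~\cite{FS} and \cite{KS3} for the underlying semigroup estimates), these two facts are equivalent to, and in particular imply, the parabolic Harnack inequality, which in turn is equivalent to the conjunction of doubling and the scale-invariant $L^2$ Poincar\'e inequality. Extracting the Poincar\'e estimate from this chain yields precisely $\int_{B(x,r)}|f-f_r|^2\, d\mu \le C r^2 \int_{B(x,2r)} \Gamma(f)\, d\mu$, with the constant depending only on the doubling and Gaussian constants, hence on $d,\kappa,\rho_2$.

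At the level of mechanism, the heart of the matter is the passage from heat-kernel information to the Poincar\'e inequality, which rests on the Fabes--Stroock comparison of $f$ with its heat average. The spectral bound $\|f - P_t f\|_{L^2}^2 \le t\, \mathcal E(f,f)$, valid since $(1-e^{-s})^2 \le s$ for $s\ge 0$, controls the oscillation of $f$ by $t\int \Gamma(f)$, while the on-diagonal lower bound \eqref{PPP} together with doubling forces $P_t f$, evaluated at $t \simeq r^2$, to be comparable to the average $f_r$ on $B(x,r)$. Balancing these two estimates, with the localization onto $B(x,2r)$ handled through the doubling property, produces the inequality at the single scale $r$ with $2r$ appearing on the right-hand side.

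The main obstacle I anticipate is not any single computation but the \emph{verification of the abstract hypotheses} under which the equivalence theorems of \cite{Gri,SC,St1,St2,St3} are proved: namely that $\mathcal E$ is regular and strongly local, that the intrinsic distance induces the original topology and renders $(\M,d)$ a complete length space with relatively compact balls, and that the semigroup admits a jointly continuous kernel $p(x,y,t)$. The first two points follow from local subellipticity (which yields H\"ormander-type hypoellipticity and the identification $d = d_s$) together with completeness and Theorem~\ref{T:doubling}; the last is again a consequence of subellipticity. Once strict regularity is confirmed, the Poincar\'e inequality is a formal consequence of Theorems~\ref{T:gb} and~\ref{T:doubling}, and no genuinely new analysis beyond the cited literature is required.
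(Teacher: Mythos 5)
Your proposal is correct and follows essentially the same route as the paper: the paper likewise derives Theorem \ref{T:P} by combining the two-sided Gaussian bounds of Theorem \ref{T:gb} with the volume doubling of Theorem \ref{T:doubling} and then invoking the known equivalence, for strictly regular local Dirichlet spaces, between Gaussian heat-kernel bounds, the parabolic Harnack inequality, and doubling plus the scale-invariant $L^2$ Poincar\'e inequality, citing \cite{FS}, \cite{KS3}, \cite{Gri}, \cite{SC}, \cite{St1}, \cite{St2}, \cite{St3}. Your additional sketch of the Fabes--Stroock mechanism and of the verification of the abstract hypotheses goes beyond what the paper records, but it is consistent with, not different from, the paper's argument.
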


Since thanks to Theorem \ref{T:doubling} the space $(\M,\mu,d)$, where $d = d(x,y)$ indicates the sub-Riemannian distance \eqref{ds}, is a space of homogeneous type, and furthermore \eqref{ls} above guarantees that it is a length-space, then, arguing as in \cite{J}, from Theorem \ref{T:P} we obtain the following result.

\begin{corollary}\label{C:Pi}
Under the hypothesis of Theorem \ref{T:P} there exists a constant $C^* = C^*(d,\kappa,\rho_2)>0$ such that for every $x\in \M, r>0$, and $f\in C^\infty(\M)$ one has
\[
\int_{B(x,r)} |f(y) - f_r|^2 d\mu(y) \le C^* r^2 \int_{B(x,r)} \Gamma(f)(y) d\mu(y).
\]
\end{corollary}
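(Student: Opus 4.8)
The plan is to upgrade the \emph{weak} Poincaré inequality of Theorem~\ref{T:P}, in which the energy is integrated over the dilated ball $B(x,2r)$, to the \emph{strong} inequality of the corollary, in which both integrals are over the same ball $B(x,r)$. The two structural ingredients at our disposal are precisely those isolated in the discussion preceding the statement: by Theorem~\ref{T:doubling} the metric measure space $(\M,d,\mu)$ is of homogeneous type, and by the connectivity assumption \eqref{ls} it is a length space. This is exactly the framework in which Jerison's self-improvement mechanism \cite{J} applies, and the proof consists in adapting his argument to our setting.

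First I would fix $x\in\M$, $r>0$, write $B=B(x,r)$, and construct a Whitney-type covering of $B$ by a countable family of balls $B_j=B(x_j,r_j)$ with $r_j$ comparable to $\mathrm{dist}(x_j,\M\setminus B)$, such that the $B_j$ cover $B$, such that $2B_j\subset B$ for every $j$, and such that the doubled balls $\{2B_j\}$ have bounded overlap. The existence of such a covering, with overlap constant depending only on the doubling constant $C_1$ of Theorem~\ref{T:doubling}, is a standard consequence of homogeneity. The point of requiring $2B_j\subset B$ is that Theorem~\ref{T:P}, applied on each $B_j$, then controls the oscillation of $f$ on $B_j$ by $r_j^2\int_{2B_j}\Gamma(f)\,d\mu$, with the energy integral already confined to $B$.

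Next I would verify the Boman (chain) condition for $B$. Using the length-space structure I would fix a central ball $B_0\subset B$ of radius comparable to $r$ and, for every Whitney ball $B_j$, a finite chain $B_j=B_{j_0},B_{j_1},\dots,B_{j_{N(j)}}=B_0$ of Whitney balls such that consecutive balls overlap in a set whose measure is comparable to the measure of the smaller of the two, with the chains remaining inside $B$ and a uniform bound on how many chains pass through any given Whitney ball. The length-space property is exactly what permits such interior chains to be drawn, by following a curve of length comparable to $d(x_j,x)$ from $x_j$ to $x$ and stringing Whitney balls along it. I would then telescope (writing $f_B=\frac{1}{\mu(B)}\int_B f\,d\mu$),
\[
|f_{B_j}-f_{B_0}|\le\sum_{k}|f_{B_{j_k}}-f_{B_{j_{k+1}}}|,
\]
bounding each increment by applying Theorem~\ref{T:P} on $B_{j_k}$ and $B_{j_{k+1}}$ and using their substantial overlap to compare the two averages.

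Finally I would assemble the estimate. Since the average $f_B$ minimises $c\mapsto\int_B|f-c|^2\,d\mu$ over constants $c$, it suffices to bound $\int_B|f-f_{B_0}|^2\,d\mu$, which I would dominate by $\sum_j\int_{B_j}|f-f_{B_0}|^2\,d\mu$ using that $\{B_j\}$ covers $B$. On each $B_j$ I would split $|f-f_{B_0}|^2\le 2|f-f_{B_j}|^2+2|f_{B_j}-f_{B_0}|^2$: the first piece is handled directly by Theorem~\ref{T:P} on $B_j$, contributing $C r_j^2\int_{2B_j}\Gamma(f)\,d\mu$, while the second is handled by the telescoping bound, each increment again controlled by Theorem~\ref{T:P} on a chain ball. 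Summing over $j$ and invoking both the bounded overlap of $\{2B_j\}$ and the bounded multiplicity of the chains---both quantitative consequences of doubling---collapses the resulting double sum into a single integral, yielding $C^*r^2\int_B\Gamma(f)\,d\mu$ with $C^*$ depending only on $d,\kappa,\rho_2$. The main obstacle is the chaining step: one must guarantee that the chains can be realised \emph{inside} $B$ with uniformly bounded multiplicity, so that no dilation beyond $B$ re-enters the energy and the summation produces a constant independent of $x$ and $r$. It is precisely here---and not merely for connectivity---that the length-space structure furnished by \eqref{ls} is indispensable.
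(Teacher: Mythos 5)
Your proposal is correct and follows essentially the same route as the paper: the text deduces the corollary from Theorem \ref{T:P} precisely by invoking the doubling property of Theorem \ref{T:doubling}, the length-space structure guaranteed by \eqref{ls}, and the self-improvement argument of Jerison \cite{J}, which is exactly the Whitney-covering and chaining mechanism you spell out. The only difference is that the paper leaves the Boman chain argument implicit by citation, whereas you carry it out.
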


Furthermore, the following scale invariant Harnack inequality for local solutions holds.

\begin{theorem}\label{T:H}
Assume that the curvature-dimension inequality \eqref{cdi} be satisfied for $\rho_1\ge  0$, and that the  Hypothesis \ref{A:exhaustion}, \ref{A:main_assumption},  \ref{A:regularity} be valid. If $u$ is a positive solution of the heat equation in a cylinder of the form $Q=(s,s+\alpha r^2) \times B(x,r)$ then
\begin{equation}\label{Ha}
\sup_{Q-} u\le C \inf_{Q+} u,
\end{equation}
where for some fixed $0 < \beta < \gamma <\delta<\alpha<\infty$ and $\eta \in (0,1)$,
\[
Q-=(s+\beta r^2,s+\gamma r^2)\times B(x,\eta r), Q+=(s+\delta r^2, s+\alpha r^2)\times B(x,\eta r).
\]
Here, the constant $C$ is independent of $x,r$ and $u$, but depends on the parameters $d, \kappa, \rho_2$, as well as on $\alpha, \beta, \gamma, \delta$ and $\eta$.
\end{theorem}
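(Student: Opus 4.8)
The plan is to deduce the scale-invariant parabolic Harnack inequality \eqref{Ha} by invoking the general equivalence theory for strongly local Dirichlet spaces developed by Saloff-Coste, Grigor'yan, and Sturm in \cite{FS}, \cite{KS3}, \cite{Gri}, \cite{SC}, \cite{St1}, \cite{St2}, \cite{St3}, and summarized in \cite{GSC}, \cite{Gri2}. The starting point is to recognize that the triple $(\M,\mu,\Gamma)$ gives rise to a symmetric, strongly local, regular Dirichlet form $\mathcal{E}(f,g) = \int_\M \Gamma(f,g)\,d\mu$, whose generator is precisely the sub-Laplacian $L$. A crucial structural fact that I would verify first is that the intrinsic distance of this Dirichlet form, namely $\sup\{|f(x)-f(y)| : f\in C^\infty(\M),\ \Gamma(f)\le 1\}$, coincides with the sub-Riemannian distance $d$ of \eqref{di}; this is exactly the identification $d = d_s$ recorded after \eqref{ds}, while \eqref{ls} guarantees that $(\M,d)$ is a length space. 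This places us squarely within the framework where the abstract equivalence applies.

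Next I would assemble the two ingredients that the equivalence theory requires as input. The first is the global volume doubling property, already established in Theorem \ref{T:doubling}. The second is the scale-invariant $L^2$ Poincar\'e inequality on balls, established in Corollary \ref{C:Pi}. According to the cited theory — specifically the characterizations in \cite{SC}, \cite{St2}, \cite{St3} — on a strongly local regular Dirichlet space the conjunction of volume doubling and the Poincar\'e inequality (item (2) in the list preceding the statement) is equivalent to the validity of the scale-invariant parabolic Harnack inequality for nonnegative local weak solutions of $Lu - u_t = 0$ (item (3)). Applying this equivalence directly yields \eqref{Ha}, with a constant $C$ depending only on the doubling and Poincar\'e constants, hence on $d,\kappa,\rho_2$, together with the geometric parameters $\alpha,\beta,\gamma,\delta,\eta$ that fix the sub-cylinders $Q^-$ and $Q^+$. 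I note that one could equally route the argument through item (1), since the two-sided Gaussian bound of Theorem \ref{T:gb} is itself equivalent to (2) and (3); invoking (2) is merely the most transparent citation given that both of its pieces are in hand.

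The hard part, and the step requiring the most care, will be verifying that our sub-Riemannian setting genuinely satisfies the structural hypotheses demanded by the abstract theory — in particular that the Dirichlet form is \emph{strictly} (strongly) local and regular, and that the notion of \emph{local weak solution} used in the statement coincides with the one employed in \cite{SC}, \cite{St2}. The local subellipticity of $L$ in the sense of \cite{FP} supplies the requisite hypoellipticity and the regularity of the form, so that weak solutions are smooth in the space variable and the two notions of solution agree; once this identification is in place, the remaining implications are a direct citation of the equivalence theorem. I emphasize that no new analytic estimate is needed at this stage: the entire force of the argument has already been encoded in Theorems \ref{T:doubling} and \ref{T:gb} (equivalently Corollary \ref{C:Pi}), and this final step is the harvesting of the classical De Giorgi--Nash--Moser machinery for local Dirichlet spaces.
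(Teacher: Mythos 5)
Your proposal is correct and follows essentially the same route as the paper: the paper likewise obtains Theorem \ref{T:H} by citing the equivalence of (1) two-sided Gaussian bounds, (2) doubling plus Poincar\'e, and (3) the parabolic Harnack inequality on strictly regular local Dirichlet spaces, with Theorems \ref{T:doubling} and \ref{T:gb} supplying the input. Your choice to enter the equivalence through item (2) rather than item (1) is immaterial, as you yourself note, since in this paper the Poincar\'e inequality of Theorem \ref{T:P} is itself harvested from the same equivalence.
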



\section{Negatively curved manifolds}\label{S:nc}

In the previous sections we have exclusively discussed the case of sub-Riemannian manifolds with nonnegative Ricci curvature. In this section we present some of the main results in \cite{BBGM} relative to the case in which Ricci is bounded from below by a number which is allowed to be negative.

\begin{theorem}\label{T:main}
Suppose that the generalized curvature-dimension inequality \eqref{cdi} hold for some $\rho_1\in\R$, and that the  Hypothesis \ref{A:exhaustion}, \ref{A:main_assumption},  \ref{A:regularity} be valid. Then, there exist constants $C_1, C_2>0$, depending only on $\rho_1, \rho_2, \kappa, d$, for which one has for every $x,y\in \M$ and every $r>0$:
\begin{equation}\label{dcsr}
\mu(B(x,2r)) \le \ C_{1}\exp\left(C_{2}r^{2}\right)\mu(B(x,r)).
\end{equation}
The constant $C_2$ tends to zero as $\rho_1\to 0$, and thus \eqref{dcsr} contains in particular the estimate in Theorem \ref{T:doubling}.
\end{theorem}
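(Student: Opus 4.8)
The strategy is to follow the architecture of the proof of Theorem \ref{T:doubling}, replacing the nonnegative-curvature Harnack inequality of Theorem \ref{T:harnack} by one valid for arbitrary $\rho_1\in\R$. The crucial observation is that the Li--Yau estimate of Theorem \ref{T:ge} already holds for every $\rho_1\in\R$; the $\rho_1$-dependent additive and coefficient terms that are simply dropped in the passage to the clean inequality \eqref{liyaupositifzero} are precisely what will generate the Gaussian factor $\exp(C_2 r^2)$, with all corrections vanishing as $\rho_1\to 0$.

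First I would integrate the full estimate of Theorem \ref{T:ge} along a space-time path, exactly as in the proof of Theorem \ref{T:harnack}. When $\rho_1<0$ the coefficient $1+\frac{3\kappa}{2\rho_2}-\frac{2\rho_1}{3}t$ of $\frac{LP_tf}{P_tf}$ is no longer constant, and the additive terms $\frac{d\rho_1^2}{6}t$ and $-\frac{\rho_1 d}{2}(1+\frac{3\kappa}{2\rho_2})$ no longer vanish. Carrying these through the path integral, and restricting to a bounded time window so that the varying coefficient stays controlled, I expect a Harnack inequality of the form
\[
P_s f(x) \le P_t f(y)\, \Phi(s,t)\, \exp\left(\frac Dd \frac{d(x,y)^2}{4(t-s)}\right),
\]
where $\Phi(s,t)$ now carries extra factors $\exp\bigl(c|\rho_1|(t-s)+c'|\rho_1|^2(t-s)^2\bigr)$ beyond the $(t/s)^{D/2}$ of the nonnegative case. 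Taking $t=2s$ and $t-s$ comparable to $r^2$ turns these into the controlling factor $\exp(C_2 r^2)$, with $C_2\to 0$ as $\rho_1\to 0$.

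With this Harnack inequality in hand I would reproduce the chain \eqref{ine7}--\eqref{final}. The cutoff and Cauchy--Schwarz step yielding $P_{r^2}(\mathbf 1_{B(x,r)})(x)^2\le p(x,x,2r^2)\,\mu(B(x,r))$ is purely semigroup-theoretic and survives verbatim. The on-diagonal upper bound \eqref{odub} and the lower bound \eqref{PPP} now inherit the exponential corrections from the negatively-curved Harnack inequality. The one genuinely new input is a substitute for Theorem \ref{T:estimee-P-boule}: instead of a uniform lower bound $P_{Ar^2}(\mathbf 1_{B(x,r)})(x)\ge\frac12$, one should establish a degraded version of the type $P_{Ar^2}(\mathbf 1_{B(x,r)})(x)\ge c\,\exp(-C|\rho_1|r^2)$. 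Feeding all of these into \eqref{final} and collecting the exponential factors yields \eqref{dcsr}.

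The hard part will be this degraded ball-mass estimate. Even for $\rho_1\ge 0$, Theorem \ref{T:estimee-P-boule} was the technically most demanding result, occupying most of \cite{BBG}; its proof is an iteration scheme built on the Li--Yau estimate. Reworking that scheme to absorb the negatively-curved corrections, while keeping every constant scale-invariant and verifying that the accumulated error degenerates continuously to the $\rho_1=0$ case (so that $C_2\to 0$), is the delicate point. A secondary subtlety is that, unlike the nonnegative case where one may let $r\to\infty$ freely, here the estimates are only uniform after restricting to bounded time windows, so one must patch the local-in-time doubling into the global statement \eqref{dcsr}.
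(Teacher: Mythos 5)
A preliminary point: the paper you are reading does not actually prove Theorem \ref{T:main}; Section \ref{S:nc} only states it and defers the proof to \cite{BBGM}, so there is no in-paper argument to compare against. Your overall architecture --- integrate the full Li--Yau estimate of Theorem \ref{T:ge}, which is valid for every $\rho_1\in\R$, along a space-time path to obtain a Harnack inequality with $\rho_1$-dependent exponential corrections, and then rerun the chain \eqref{ine7}--\eqref{final} with a degraded ball-mass estimate --- is indeed the strategy of \cite{BBGM}. But as written your proposal has two gaps. The decisive one is the negatively-curved analogue of Theorem \ref{T:estimee-P-boule}, i.e. a bound of the type $P_{Ar^2}(\mathbf 1_{B(x,r)})(x)\ge c\,e^{-C|\rho_1|r^2}$: you correctly identify it as the heart of the matter and then do not prove it. Since already for $\rho_1\ge 0$ this estimate is, as the paper notes, fairly complicated and occupies a large part of \cite{BBG}, asserting its degraded version without an argument leaves the proof incomplete at exactly the point where all of the difficulty is concentrated.

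The second issue is quantitative and, if left as stated, would break the final estimate. You anticipate a Harnack factor $\exp\bigl(c|\rho_1|(t-s)+c'|\rho_1|^2(t-s)^2\bigr)$; at the forced time scale $t\sim r^2$ the quadratic term gives $\exp(c'\rho_1^2 r^4)$, which is strictly weaker than the asserted $\exp(C_2 r^2)$ and would propagate an $r^4$ into \eqref{dcsr}. The cure is to divide the right-hand side of Theorem \ref{T:ge} by the positive, growing coefficient $1+\frac{3\kappa}{2\rho_2}+\frac{2|\rho_1|}{3}t$ \emph{before} integrating along the path: the quotient $\bigl(\frac{d\rho_1^2}{6}t\bigr)/\bigl(1+\frac{3\kappa}{2\rho_2}+\frac{2|\rho_1|}{3}t\bigr)$ is bounded by $\frac{d|\rho_1|}{4}$ uniformly in $t$, so its contribution to the exponent is linear in $t-s$, while the gradient term acquires the factor $1+\frac{3\kappa}{2\rho_2}+\frac{2|\rho_1|}{3}t$ in front of $\frac{d(x,y)^2}{4(t-s)}$, which for $d(x,y)\lesssim\sqrt t\sim r$ again contributes only $C+C|\rho_1|r^2$. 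Done this way there is no need to restrict to a bounded time window or to ``patch'' a local-in-time doubling into a global statement: the linear-in-$t$ degradation is precisely what produces $\exp(C_2 r^2)$ with $C_2\to 0$ as $\rho_1\to 0$.
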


In order to state the next result, we introduce a family  of control distances $d_\tau$ for $\tau \ge 0$. Given $x, y\in \M$, let us consider 
\[
S_\tau(x,y) =\{\gamma:[0,T]\to \M\mid \gamma\ \text{is subunit for}\ \Gamma+\tau^2 \Gamma^Z, \gamma(0) = x,\ \gamma(T) = y\}.
\]
A curve which is subunit for $\Gamma$ is obviously subunit for $ \Gamma+\tau^2 \Gamma^Z$, therefore thanks to the assumption \eqref{ls} above we have $S_\tau(x,y) \neq \varnothing$. We can then define
\begin{equation}
\label{distance-riem}
d_\tau(x,y) = \inf\{\ell_s(\gamma)\mid \gamma\in S_\tau(x,y)\}.
\end{equation} 
Note that $d(x,y)= d_0(x,y)$ and that, clearly: $d_\tau(x,y)\leq d(x,y)$.

\begin{theorem}\label{T:main2}
Suppose that the generalized curvature-dimension inequality hold for some $\rho_1\in\R$, and that the  Hypothesis \ref{A:exhaustion}, \ref{A:main_assumption},  \ref{A:regularity} be satisfied.  Let $\tau \ge 0$. Then, there  exists a constant $C(\tau)>0$, depending only on $\rho_1, \rho_2, \kappa, d$ and $\tau$, for which one has for every $x,y\in \M$:
\begin{equation}\label{NSWimp}
d\left(x,y\right)\leq C(\tau) \max\{ \sqrt{ d_{\tau}\left(x,y\right)},d_{\tau}\left(x,y\right)\}.
\end{equation}
\end{theorem}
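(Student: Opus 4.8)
The plan is to deduce the comparison from matching two-sided Gaussian bounds for the heat kernel $p(x,y,t)$ of $L$, both anchored at the \emph{same} base point $x$ so that the volume factors cancel. The mechanism that produces the distances $d_\tau$ is the following observation: the negatively curved Li--Yau inequality of Theorem \ref{T:ge} controls not $\Gamma(\ln P_t f)$ alone but the combination $\Gamma(\ln P_t f) + \frac{2\rho_2}{3}\,t\,\Gamma^Z(\ln P_t f)$, whose natural underlying metric is $\Gamma + \tau(t)^2\Gamma^Z$ with $\tau(t) = \sqrt{2\rho_2 t/3}$. Accordingly I would first record a sub-Riemannian Gaussian \emph{upper} bound in terms of $d=d_0$,
\[
p(x,y,t) \le \frac{C_1}{\mu(B(x,\sqrt t))}\,\exp\left(-\frac{d(x,y)^2}{5t} + C_2\, t\right),
\]
which is the $\rho_1\in\R$ analogue of Theorem \ref{T:ub} (Theorem \ref{T:main} supplies the doubling needed to reduce the two-volume bound to this one-volume form).

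The second ingredient is a Gaussian \emph{lower} bound expressed through the time-dependent distance $d_{\tau(t)}$. I would obtain it exactly as in the proof of Theorem \ref{T:harnack}: reformulating Theorem \ref{T:ge} for $u=P_\theta f$ and integrating $\frac{d}{d\xi}\ln u$ along a space-time path $\alpha(\xi)=(\gamma(\xi),\theta(\xi))$ in which $\gamma$ is now chosen \emph{subunit for} $\Gamma+\tau(t)^2\Gamma^Z$ and $\theta$ decreases linearly. Optimizing the free parameter as in that proof yields a parabolic Harnack inequality whose Gaussian exponent is $\sim d_{\tau(t)}(x,y)^2/t$; combined with the on-diagonal lower bound $p(x,x,\ve t)\ge c/\mu(B(x,\sqrt{\ve t}))$ (the $\rho_1\in\R$ counterpart of \eqref{PPP}) and the doubling of Theorem \ref{T:main}, this gives
\[
p(x,y,t) \ge \frac{c_0}{\mu(B(x,\sqrt t))}\,\exp\left(-\frac{c\,d_{\tau(t)}(x,y)^2}{t} - C'\,t\right).
\]

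Comparing the two displays, the volumes cancel and after taking logarithms I arrive at $d(x,y)^2 \le 5c\,d_{\tau(t)}(x,y)^2 + C_4\,t^2 + C_5\,t$ for all $t>0$. The final, and decisive, step is to pass from the moving distance $d_{\tau(t)}$ to the fixed $d_\tau$ and to optimize in $t$. Here I would use the elementary scaling lemma that for $\sigma\le\tau$ one has $d_\sigma(x,y)\le \frac{\tau}{\sigma}\,d_\tau(x,y)$: slowing a $\tau$-subunit path by the factor $\tau/\sigma$ turns it into a $\sigma$-subunit path, since $\frac{\sigma^2}{\tau^2}\sum a_i^2 + \sum b_j^2\le \sum a_i^2+\sum b_j^2\le1$. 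Applying this with $\sigma=\tau(t)\le\tau$, i.e.\ for $t\le \frac{3\tau^2}{2\rho_2}$, gives $d_{\tau(t)}^2\le \frac{3\tau^2}{2\rho_2 t}\,d_\tau^2$, whence
\[
d(x,y)^2 \le \frac{C_6(\tau)\,d_\tau(x,y)^2}{t} + C_4\,t^2 + C_5\,t.
\]
Choosing $t\sim d_\tau(x,y)$ balances the first and last terms and produces $d(x,y)^2\lesssim C(\tau)\,d_\tau(x,y)$ in the regime of small $d_\tau$ — this is the source of the square root. For large $d_\tau$ the optimal $t$ would violate $t\le \frac{3\tau^2}{2\rho_2}$, so one instead takes $t=\frac{3\tau^2}{2\rho_2}$ and uses $d_{\tau(t)}\le d_\tau$ directly, obtaining $d(x,y)^2\lesssim C(\tau)\,d_\tau(x,y)^2$. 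Together these two regimes give $d\le C(\tau)\max\{\sqrt{d_\tau},d_\tau\}$.

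I expect the main obstacle to be the rigorous derivation of the lower bound in terms of the time-dependent metric $d_{\tau(t)}$: one must handle the $\theta$-dependence of the coefficient $\frac{2\rho_2}{3}\theta$ of $\Gamma^Z$ along the interpolating path and carry along the correction terms arising from $\rho_1\neq0$ in Theorem \ref{T:ge}, all while keeping the constants uniform so that the final constant depends only on $\rho_1,\rho_2,\kappa,d$ and $\tau$. By contrast, the scaling lemma and the two-regime optimization that generate the characteristic dichotomy $\max\{\sqrt{d_\tau},d_\tau\}$ are, once the two Gaussian bounds are in place, essentially routine.
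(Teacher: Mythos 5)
The survey states Theorem \ref{T:main2} without proof, deferring to \cite{BBGM}; your outline --- a Gaussian upper bound phrased in $d$, a Harnack-type lower bound in the metrics $d_\tau$ extracted from the combination $\Gamma+\tfrac{2\rho_2}{3}\,t\,\Gamma^Z$ appearing in Theorem \ref{T:ge}, cancellation of the volume factors via the doubling of Theorem \ref{T:main}, the reparametrization lemma $d_\sigma\le(\tau/\sigma)\,d_\tau$, and the two-regime optimization in $t$ producing $\max\{\sqrt{d_\tau},d_\tau\}$ --- is essentially the argument of that paper. The only point requiring real care is the one you flag yourself: since the coefficient of $\Gamma^Z$ in Theorem \ref{T:ge} is $\tfrac{2\rho_2}{3}\theta$ at time $\theta$, the integration along the interpolating space-time path controls $\Gamma+\tau^2\Gamma^Z$ of $\ln u$ only when $\tau^2\le\tfrac{2\rho_2}{3}\theta$ at every point of the path, so the admissible $\tau$ is governed by the \emph{earlier} time $s$ rather than by $t$ (and the case $\tau=0$, where your scaling lemma degenerates, is trivial since $d_0=d$).
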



\section{Geometric examples}\label{S:ge}

In this section we present several classes of sub-Riemannian spaces satisfying the generalized curvature-dimension inequality in Definition \ref{D:cdi} above. These examples constitute the central motivation of the present work.

\subsection{Riemannian manifolds}

As we have mentioned in the introduction, when $\bM$ is a $n$-dimensional complete Riemannian manifold with Riemannian distance $d_R$, Levi-Civita connection $\nabla$ and Laplace-Beltrami operator $\Delta$, our main assumptions hold trivially. It suffices in fact to choose $\Gamma^Z = 0$ to satisfy Hypothesis \ref{A:main_assumption} in a trivial fashion. Hypothesis \ref{A:exhaustion} is also satisfied since it is equivalent to assuming that $(\bM,d_R)$ be complete (observe in passing that the distance \eqref{di} coincides with $d_R$). Finally, with the choice $\kappa = 0$ and $\rho_1 = \rho$ the curvature-dimension inequality \eqref{cdi} reduces to \eqref{cdR}, which, as we have already observed, is implied by (and it is in fact equivalent to) the assumption Ric $\ \ge \rho$.

\subsection{The three-dimensional Sasakian models}\label{SS:3dimmod}

The purpose of this section is providing a first basic sub-Riemannian example which fits the framework of the present paper. This example was first studied in \cite{bakry-baudoin}. Given a number $\rho_1\in \R$, suppose that $\bG(\rho_1)$ be a three-dimensional Lie group whose Lie algebra $\mathfrak{g}$ has a
 basis $\left\{ X,Y ,Z \right\}$ satisfying:
\begin{itemize}
\item[(i)] $[X,Y]=Z$,
\item[(ii)] $[X,Z]= -\rho_1 Y$,
\item[(iii)] $[Y,Z]=\rho_1 X$.
\end{itemize} 
A sub-Laplacian on  $\bG(\rho_1)$ is  the left-invariant, second-order differential operator
\begin{equation}\label{slmodel}
L= X^{2}  + Y^{2}.
\end{equation}
In view of (i)-(iii) H\"ormander's theorem, see \cite{Ho}, implies that $L$ be hypoelliptic, although it fails to be elliptic at every point of $\bG(\rho_1)$. 
From \eqref{gamma} we find in the present situation 
 \[
 \Gamma(f) =\frac{1}{2}\big(L(f^2)-2fLf)= (Xf)^{2}+ (Yf)^{2}.
 \] 
If we define 
\[
\Gamma^Z(f,g)=Zf Zg,
\]
then from (i)-(iii) we easily verify that
\[
\Gamma(f,\Gamma^Z(f)) = \Gamma^Z(f,\Gamma(f)).
\]
We conclude that the Hypothesis \ref{A:main_assumption} is satisfied. It is not difficult to show that the Hypothesis \ref{A:exhaustion} is also fulfilled.

Using (i)-(iii) we  leave it to the reader to verify that
\begin{equation}\label{commLZ} 
[L,Z] = 0.
\end{equation}
By means of \eqref{commLZ} we easily find 
\begin{align*}
\Gamma_2^Z(f) &= \frac 12 L(\Gamma^Z(f)) - \Gamma^Z(f,Lf) = Zf [L,Z]f + (XZf)^2 +(YZf)^2
\\
& =  (XZf)^2 +(YZf)^2.
\end{align*}
Finally, from definition \eqref{gamma2} and from (i)-(iii) we obtain
\begin{align*}\label{Gamma2_3f}
\Gamma_2 (f) & = \frac 12 L(\Gamma(f)) - \Gamma(f,Lf)
\\
& = \rho_1 \Gamma(f) + (X^2 f)^2 + (YXf)^2 + (XYf)^2 +(Y^2 f)^2
\\
& + 2 Yf (XZf) - 2 Xf (YZ f).
\end{align*}
We now notice that 
\[
(X^2f)^2 + (YXf)^2 + (XYf)^2 +(Y^2 f)^2 = ||\nabla^2_H f||^2 + \frac 12 \Gamma^Z(f),
\]
where we have denoted by 
\[
\nabla^2_H f= \begin{pmatrix} X^2 f & \frac 12 (XY f + YXf)
\\
\frac 12 (XY f + YXf) & Y^2 f
\end{pmatrix}
\]
the symmetrized Hessian of $f$ with respect to the horizontal distribution generated by $X, Y$. 
Substituting this information in the above formula we find
\[
\Gamma_2 (f)  =  ||\nabla^2_H f||^2 + \rho_1 \Gamma(f) + \frac 12 \Gamma^Z(f) + 2 \big(Yf (XZf) - Xf (YZ f)\big).
\]
By the above expression for $\Gamma_2^Z(f)$, using Cauchy-Schwarz inequality, we obtain for every $\nu >0$
\[
|2 Yf (XZf) - 2 Xf (YZ f)| \le \nu \Gamma^Z_2(f) + \frac 1\nu \Gamma(f).
\]
Similarly, one easily recognizes that
\[
||\nabla^2_H f||^2 \ge \frac 12 (Lf)^2.
\]
Combining these inequalities,
we conclude that we have proved the following result. 
\begin{proposition}\label{P:cdmodels}
For every $\rho_1\in \R$ the Lie group $\mathbb{G}(\rho_1)$, with the sub-Laplacian $L$ in \eqref{slmodel}, satisfies the generalized  curvature dimension inequality \emph{CD}$(\rho_1,\frac{1}{2},1,2)$. Precisely, for every $f\in C^\infty(\bG(\rho_1))$ and any $\nu>0$ one has:
 \[
 \Gamma_{2}(f)+\nu \Gamma^Z_{2}(f) \ge \frac{1}{2} (Lf)^2 +\left(\rho_1 -\frac{1}{\nu}\right)  \Gamma (f)
 +\frac{1}{2} \Gamma^Z (f).
 \]
\end{proposition}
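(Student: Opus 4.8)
The plan is to read off the inequality directly from the explicit expression for $\Gamma_2(f)$ obtained above, reducing the whole statement to three elementary algebraic estimates and avoiding entirely any appeal to Jacobi fields or comparison geometry. I would begin from the formula
\[
\Gamma_2 (f) = \rho_1 \Gamma(f) + (X^2 f)^2 + (YXf)^2 + (XYf)^2 +(Y^2 f)^2 + 2\big(Yf\,(XZf) - Xf\,(YZ f)\big),
\]
which comes from the definition \eqref{gamma2} together with the bracket relations (i)--(iii). The structural role of the Lie algebra is precisely that $[X,Y]=Z$ feeds the horizontal second derivatives a vertical component, while $[X,Z]=-\rho_1 Y$ and $[Y,Z]=\rho_1 X$ loop back into the horizontal span; these are what produce the term $\rho_1\Gamma(f)$ and the indefinite mixed term on the right.

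Next I would isolate the vertical piece hidden in the four squared second derivatives. Decomposing $XYf$ and $YXf$ into symmetric and antisymmetric parts and using $XYf - YXf = [X,Y]f = Zf$ gives
\[
(X^2 f)^2 + (YXf)^2 + (XYf)^2 +(Y^2 f)^2 = \|\nabla^2_H f\|^2 + \tfrac{1}{2}\Gamma^Z(f),
\]
since $\Gamma^Z(f) = (Zf)^2$. I would then dispose of the indefinite mixed term by Young's inequality: viewing $2\big(Yf\,(XZf) - Xf\,(YZf)\big)$ as twice the inner product of $(Yf,-Xf)$ with $(XZf,YZf)$, and recalling that $\Gamma_2^Z(f) = (XZf)^2 + (YZf)^2$ (a consequence of $[L,Z]=0$), one gets for every $\nu>0$
\[
\big|2Yf\,(XZf) - 2Xf\,(YZf)\big| \le \nu\,\Gamma_2^Z(f) + \tfrac{1}{\nu}\,\Gamma(f).
\]

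Finally, since $Lf = X^2 f + Y^2 f$ is the trace of the symmetric matrix $\nabla^2_H f$, the elementary bound $(\operatorname{tr} A)^2 \le 2\|A\|^2$ yields $\|\nabla^2_H f\|^2 \ge \tfrac{1}{2}(Lf)^2$. Substituting the two displayed identities and the Young estimate into the formula for $\Gamma_2(f)$, and then applying the trace bound, gives
\[
\Gamma_2(f) + \nu\,\Gamma_2^Z(f) \ge \tfrac{1}{2}(Lf)^2 + \big(\rho_1 - \tfrac{1}{\nu}\big)\Gamma(f) + \tfrac{1}{2}\Gamma^Z(f),
\]
which is exactly CD$(\rho_1,\tfrac12,1,2)$ with $\rho_2=\tfrac12$, $\kappa=1$, $d=2$.

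The one genuinely delicate step is the first: the bookkeeping in computing $\Gamma_2(f)$, where one must repeatedly commute $X$ and $Y$ past each other and past $Z$ using (i)--(iii) and recognize which terms cancel or collapse into $\rho_1\Gamma(f)$. Once that explicit expression is secured, the remaining three steps are routine and self-contained. It is worth noting that the resulting parameters are forced by the structure: the factor $\tfrac12$ in front of $(Lf)^2$ reflects that the horizontal distribution is two-dimensional (hence $d=2$), and the coupling constant $\kappa=1$ is dictated by the single application of Young's inequality to the mixed term.
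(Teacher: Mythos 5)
Your proposal is correct and follows essentially the same route as the paper: the explicit computation of $\Gamma_2(f)$ from the bracket relations, the decomposition of the four squared second derivatives into $\|\nabla^2_H f\|^2 + \tfrac12\Gamma^Z(f)$, the Cauchy--Schwarz/Young estimate of the mixed term against $\nu\Gamma_2^Z(f)+\tfrac1\nu\Gamma(f)$, and the trace bound $\|\nabla^2_H f\|^2\ge\tfrac12(Lf)^2$ are exactly the steps carried out in Section \ref{SS:3dimmod} before the proposition is stated. No gaps; your closing remark about why $d=2$ and $\kappa=1$ are forced is a fair reading of the argument.
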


Proposition \ref{P:cdmodels} provides a basic motivation for Definition \ref{D:cdi}. It is also important to observe at this point that the Lie group $\bG(\rho_1)$ can be endowed with a natural CR structure. Denoting in fact with $\mathcal H$ the subbundle of $T\bG(\rho_1)$ generated by the vector fields $X$ and $Y$, the endomorphism $J$ of $\mathcal H$ defined by 
\[
J(Y) = X,\ \ \ \ J(X) = - Y,
\]
satisfies $J^2 = - I$, and thus defines a complex structure on $\bG(\rho_1)$. By choosing $\theta$ as the form such that \[
\text{Ker}\ \theta = \mathcal H,\   \  \ \text{and}\ \  \  d\theta(X,Y) = 1,
\]
we obtain a CR structure on $\bG(\rho_1)$ whose Reeb vector field is $-Z$. Thus, the above choice of $\Gamma^Z$ is canonical.

The pseudo-hermitian Tanaka-Webster torsion of $\bG(\rho_1)$ vanishes, and thus $(\bG(\rho_1),\theta)$ is a Sasakian manifold. It is also easy to verify that for the CR manifold $(\bG(\rho_1),\theta)$ the Tanaka-Webster horizontal sectional curvature is constant and equals $\rho_1$. 
The following three model spaces correspond respectively to the cases $\rho_1 = 1, \rho_1 = 0$ and $\rho_1 = -1$:  

\begin{itemize}
\item[1.] The Lie group $\mathbb{SU} (2)$ is the group of
$2 \times 2$, complex, unitary matrices of determinant $1$. 
\item[2.] The Heisenberg group $\mathbb{H}$ is the group of $3\times3$ matrices:
\[
\left(
\begin{array}
[c]{ccc}
~1~ & ~x~   & ~z ~\\
~0~ & ~1~   & ~y ~\\
~0~ & ~0~   & ~1 ~
\end{array}
\right)  ,\text{ \ }x,y,z\in\mathbb{R}.
\]
\item[3.] The Lie group $\mathbb{SL} (2)$ is the group of
$2 \times 2$, real matrices of determinant $1$. 
\end{itemize}

\subsection{Sub-Riemannian manifolds with transverse symmetries}\label{SS:subR}

We now turn our attention to a large class of sub-Riemannian manifolds, encompassing the three-dimensional model spaces discussed in the previous subsection. Theorem \ref{T:cd} below states that for these sub-Riemannian manifolds the generalized curvature-dimension inequality \eqref{cdi} does hold under some natural geometric assumptions which, in the Riemannian case, reduce to requiring a lower bound for the Ricci tensor. To achieve this result, some new Bochner type identities were established in \cite{BG1}. 

Let $\M$ be a smooth, connected  manifold equipped with a bracket generating distribution $\mathcal{H}$ of dimension $d$ and a fiberwise inner product $g$ on that distribution. The distribution $\mathcal{H}$ will be referred to as the set of \emph{horizontal directions}. 

We indicate with $\mathfrak{iso}$ the finite-dimensional Lie algebra of all sub-Riemannian Killing vector fields on $\bM$ (see \cite{Strichartz}). A vector field $Z\in \mathfrak{iso}$ if the one-parameter flow generated by it locally preserves the sub-Riemannian geometry defined by $(\mathcal{H},g )$. This amounts to saying that:
 \begin{itemize}
 \item[(1)] For every $x\in \bM$, and any $u,v \in \mathcal{H}(x)$, $\mathcal{L}_Z g (u,v)=0$;
 \item[(2)] If $X\in \mathcal H$, then $[Z, X]\in \mathcal H$.
 \end{itemize}
 In (1) we have denoted by $\mathcal L_Z g$ the Lie derivative of $g$ with respect to $Z$.  Our main geometric assumption is the following:
 
 \begin{assumption}
 There exists a Lie sub-algebra $\mathcal{V} \subset \mathfrak{iso}$, such that for every $x \in \bM$, 
 \[
 T_x \bM= \mathcal{H}(x) \oplus \mathcal{V}(x).
 \]
 \end{assumption}
 
 The distribution $\mathcal{V}$ will be referred  to as the set of \emph{vertical directions}. The dimension of $\mathcal{V}$ will be denoted by $\di$. 

The choice of an inner product on the Lie algebra $\mathcal{V}$ naturally endows $\bM$ with a Riemannian extension $g_R$ of $g$ that makes the decomposition $\mathcal{H}(x) \oplus \mathcal{V}(x)$ orthogonal. Although  $g_R$ is useful for computational purposes, the geometric objects that introduced in \cite{BG1}, like the sub-Laplacian $L$, the canonical connection $\nabla$ and the "Ricci" tensor $\mathcal R$, do not depend  on the choice of an inner product on $\mathcal{V}$. We refer to \cite{BG1} for a detailed geometric discussion. 

\begin{theorem}\label{T:cd}
Suppose that there exist constants $\rho_1
\in \mathbb{R}$, $\rho_2
>0$ and $\kappa \ge 0$ such that for every $f\in C^\infty(\bM)$:
\begin{equation}\label{riccibounds}
\begin{cases}
\mathcal{R}(f) \ge \rho_1 \Gamma (f) +\rho_2 \Gamma^Z (f),
\\
\mathcal{T}(f) \le \kappa \Gamma (f).
\end{cases}
\end{equation}
Then, the sub-Riemannian manifold $\bM$ satisfies the generalized curvature-dimension inequality \emph{CD}$(\rho_1,\rho_2,\kappa,d)$ in \eqref{cdi} with respect to the sub-Laplacian $L$ and the differential form $\Gamma^Z$.
\end{theorem}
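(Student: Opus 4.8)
The plan is to follow the Bakry--Emery strategy, using as the crucial input the generalized Bochner identities for the sub-Laplacian $L$ and the canonical connection $\nabla$ established in \cite{BG1}. These identities decompose the second-order form $\Gamma_2$ as
\[
\Gamma_2(f) = ||\nabla^2_H f||^2 + \mathcal{R}(f) + \mathcal{B}(f),
\]
where $\nabla^2_H f$ is the symmetrized horizontal Hessian, $\mathcal{R}(f)$ is the intrinsic ``Ricci'' quadratic form appearing in \eqref{riccibounds}, and $\mathcal{B}(f)$ is a cross term pairing the horizontal gradient of $f$ against the mixed horizontal--vertical second derivatives of $f$. The point of the argument is that each of these three pieces can be controlled separately: the Hessian term by a Newton-type inequality, the curvature term by the first hypothesis in \eqref{riccibounds}, and the cross term by Cauchy--Schwarz against $\Gamma_2^Z$, with the second hypothesis in \eqref{riccibounds} supplying the torsion bound.

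First I would establish (or invoke from \cite{BG1}) the above Bochner decomposition. Next, since the sub-Laplacian is the trace of the horizontal Hessian over the $d$-dimensional distribution $\mathcal{H}$, i.e. $Lf = \operatorname{tr}(\nabla^2_H f)$, the elementary Newton inequality $(\operatorname{tr} A)^2 \le d\, ||A||^2$ applied to the symmetric $d\times d$ matrix $\nabla^2_H f$ yields
\[
||\nabla^2_H f||^2 \ge \frac{1}{d}(Lf)^2.
\]
This is precisely where the dimensional parameter $d$ in CD$(\rho_1,\rho_2,\kappa,d)$ enters. The curvature hypothesis then gives $\mathcal{R}(f) \ge \rho_1 \Gamma(f) + \rho_2 \Gamma^Z(f)$ directly.

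It remains to absorb the cross term $\mathcal{B}(f)$. In the geometric framework of \cite{BG1} this term has the form of an inner product between the horizontal gradient and the mixed second derivatives, exactly as the term $2\big(Yf(XZf) - Xf(YZf)\big)$ in the three-dimensional computation preceding Proposition \ref{P:cdmodels}. Since $\Gamma_2^Z(f)$ is (up to the geometry) the squared length of those mixed derivatives while $\mathcal{T}(f)$ measures the squared length of the horizontal gradient contribution, Cauchy--Schwarz followed by the weighted arithmetic--geometric mean inequality gives, for every $\nu>0$,
\[
\mathcal{B}(f) \ge -\nu\, \Gamma_2^Z(f) - \frac{1}{\nu}\,\mathcal{T}(f) \ge -\nu\, \Gamma_2^Z(f) - \frac{\kappa}{\nu}\,\Gamma(f),
\]
where the last step uses the second hypothesis in \eqref{riccibounds}. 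Adding $\nu \Gamma_2^Z(f)$ to both sides of the Bochner decomposition and combining the three estimates, the term $-\nu\Gamma_2^Z(f)$ cancels and one arrives at
\[
\Gamma_2(f) + \nu\, \Gamma_2^Z(f) \ge \frac{1}{d}(Lf)^2 + \Big(\rho_1 - \frac{\kappa}{\nu}\Big)\Gamma(f) + \rho_2\, \Gamma^Z(f),
\]
which is exactly \eqref{cdi}.

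I expect the genuine difficulty to lie entirely in the first step: producing the Bochner decomposition in the correct intrinsic form, identifying the curvature form $\mathcal{R}$ and verifying that the leftover cross term $\mathcal{B}(f)$ is of precisely the bilinear shape that pairs with $\Gamma_2^Z$ through $\mathcal{T}$. This is where the transverse-symmetry assumption is essential, since condition (2) in the definition of the Killing algebra $\V$ forces the commutators $[Z,X]$ to stay horizontal and keeps the vertical derivatives of $L$ under control. Once that geometric identity is in hand, the remaining combination of Newton's inequality, the curvature bound, and the weighted Cauchy--Schwarz is routine and, in particular, the resulting inequality holds uniformly in $\nu>0$.
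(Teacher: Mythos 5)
Your proposal is correct and follows essentially the same route as the paper: the survey itself only states Theorem \ref{T:cd} and defers the general Bochner identities to \cite{BG1}, but the computation it carries out for the three-dimensional Sasakian models in Section \ref{SS:3dimmod} is precisely your argument (Bochner-type decomposition of $\Gamma_2$, the Newton inequality $\|\nabla^2_H f\|^2\ge \tfrac{1}{d}(Lf)^2$, the curvature hypothesis for the $\mathcal{R}$ term, and the weighted Cauchy--Schwarz absorption of the cross term into $\nu\,\Gamma_2^Z(f)+\tfrac{\kappa}{\nu}\Gamma(f)$). Like the paper, you leave the genuinely hard step --- establishing the intrinsic Bochner identities and verifying that $\Gamma_2^Z$ controls the squared mixed horizontal--vertical derivatives in the general transverse-symmetry setting --- to the reference \cite{BG1}, which matches the level of detail the survey itself supplies.
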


In \cite{BG1} it was shown that, remarkably, the generalized curvature-dimension inequality \eqref{cdi} in Definition \ref{D:cdi} is equivalent to the geometric bounds \eqref{riccibounds} above. Here is the relevant result.

\begin{theorem}\label{P:tres_beau}
Suppose that there exist constants $\rho_1
\in \mathbb{R}$, $\rho_2
>0$ and $\kappa \ge 0$ such that $\M$ satisfy the generalized curvature-dimension inequality \emph{CD}$(\rho_1,\rho_2,\kappa,d)$. Then, $\M$ satisfies the geometric bounds \eqref{riccibounds}. As a consequence of this fact and of Theorem \ref{T:cd} we conclude that
\[
\emph{CD}(\rho_1,\rho_2,\kappa,d) \Longleftrightarrow \begin{cases}
\mathcal{R}(f) \ge \rho_1 \Gamma (f) +\rho_2 \Gamma^Z (f),
\\
\mathcal{T}(f) \le \kappa \Gamma (f).
\end{cases}
\]
\end{theorem}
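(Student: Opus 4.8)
The statement to prove is Theorem \ref{P:tres_beau}, the converse direction of the equivalence: assuming the generalized curvature-dimension inequality $\mathrm{CD}(\rho_1,\rho_2,\kappa,d)$ in \eqref{cdi}, I would recover the two geometric bounds \eqref{riccibounds}, namely $\mathcal{R}(f)\ge \rho_1\Gamma(f)+\rho_2\Gamma^Z(f)$ and $\mathcal{T}(f)\le\kappa\Gamma(f)$. Since Theorem \ref{T:cd} already supplies the forward implication, establishing this converse closes the equivalence. The key structural fact I would exploit is that \eqref{cdi} must hold \emph{for every $\nu>0$}, so the plan is to treat the left-hand side $\Gamma_2(f)+\nu\Gamma_2^Z(f)$ and the $\nu$-dependent right-hand side $\tfrac1d(Lf)^2+(\rho_1-\tfrac\kappa\nu)\Gamma(f)+\rho_2\Gamma^Z(f)$ as an inequality in the single parameter $\nu$, and extract information by examining its two asymptotic regimes.

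\textbf{Key steps.} First I would invoke the Bochner-type identities established in \cite{BG1} that express $\Gamma_2(f)$ and $\Gamma_2^Z(f)$ in terms of the canonical connection $\nabla$, the curvature tensor $\mathcal{R}$, and the torsion term $\mathcal{T}$; these are precisely the identities used to prove Theorem \ref{T:cd}, so the same geometric decomposition of $\Gamma_2(f)+\nu\Gamma_2^Z(f)$ is available here. Next, I would analyze the limit $\nu\to\infty$. In that regime the term $-\tfrac{\kappa}{\nu}\Gamma(f)$ on the right vanishes, while on the left $\nu\Gamma_2^Z(f)$ dominates; dividing by $\nu$ and letting $\nu\to\infty$ forces $\Gamma_2^Z(f)\ge 0$, which is consistent but weak. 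The substantive information comes from choosing $\nu$ optimally. Writing the inequality as $\Gamma_2(f)+\nu\Gamma_2^Z(f)+\tfrac{\kappa}{\nu}\Gamma(f)\ge\tfrac1d(Lf)^2+\rho_1\Gamma(f)+\rho_2\Gamma^Z(f)$, I would minimize the left side over $\nu>0$: the two $\nu$-dependent terms $\nu\Gamma_2^Z(f)+\tfrac{\kappa}{\nu}\Gamma(f)$ are minimized at $\nu_*=\sqrt{\kappa\Gamma(f)/\Gamma_2^Z(f)}$, giving minimum value $2\sqrt{\kappa\,\Gamma_2^Z(f)\,\Gamma(f)}$. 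Substituting this optimal $\nu_*$ back, \eqref{cdi} becomes the single sharp inequality
\[
\Gamma_2(f)+2\sqrt{\kappa\,\Gamma_2^Z(f)\,\Gamma(f)}\ \ge\ \tfrac1d(Lf)^2+\rho_1\Gamma(f)+\rho_2\Gamma^Z(f),
\]
which is now equivalent to \eqref{cdi} holding for all $\nu$. I would then feed the Bochner identities from \cite{BG1} into this relation and match terms: the term $\mathcal{R}(f)$ appears inside $\Gamma_2(f)$, the torsion contribution $\mathcal{T}(f)$ appears inside the cross term bounded by $2\sqrt{\kappa\,\Gamma_2^Z(f)\,\Gamma(f)}$, and the Hessian/full-gradient squares are nonnegative and can be discarded or localized. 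Comparing coefficients of $\Gamma(f)$ and $\Gamma^Z(f)$ after discarding the manifestly nonnegative Hessian terms should yield exactly $\mathcal{R}(f)\ge\rho_1\Gamma(f)+\rho_2\Gamma^Z(f)$, while the torsion bound $\mathcal{T}(f)\le\kappa\Gamma(f)$ emerges from saturating the Cauchy-Schwarz step that produced the $\sqrt{\kappa}$ factor.

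\textbf{Main obstacle.} The hard part will be running the matching argument in reverse: in the forward direction (Theorem \ref{T:cd}) one \emph{uses} the bounds \eqref{riccibounds} together with Cauchy-Schwarz to absorb the cross term $2(Yf(XZf)-Xf(YZf))$-type expressions into $\nu\Gamma_2^Z(f)+\tfrac1\nu\Gamma(f)$, and crucially one \emph{throws away} the nonnegative Hessian square $\|\nabla_H^2 f\|^2$. Reversing this means I cannot simply read off $\mathcal{R}$ and $\mathcal{T}$, because the discarded Hessian term could in principle have absorbed some of the curvature. The resolution, which is the technical heart of the result in \cite{BG1}, is to test \eqref{cdi} against functions $f$ for which the pointwise values of $\Gamma(f)$, $\Gamma^Z(f)$, $Lf$, the Hessian, and the curvature/torsion contractions can be \emph{prescribed independently} at a fixed point $x\in\bM$ --- one constructs, using the transverse-symmetry structure and a local frame, functions realizing arbitrary admissible jets at $x$. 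Once the Hessian and the second-order data are decoupled from the curvature data, the inequality \eqref{cdi} with its $\nu$-freedom collapses to an algebraic constraint purely among $\mathcal{R}(f)$, $\mathcal{T}(f)$, $\Gamma(f)$, $\Gamma^Z(f)$, and choosing the extremal jets forces both inequalities in \eqref{riccibounds}. I expect this independent-realization-of-jets step to be the only genuinely delicate point; everything else is the optimization in $\nu$ and bookkeeping with the Bochner formulas.
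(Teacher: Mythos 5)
First, a point of reference: this survey does not actually prove Theorem \ref{P:tres_beau}; it states the result and defers entirely to \cite{BG1}, so there is no in-paper argument to compare against line by line. Measured against the proof in \cite{BG1}, your strategy is essentially the right one: use the Bochner-type identities expressing $\Gamma_2(f)$ and $\Gamma_2^Z(f)$ through the canonical connection, $\mathcal{R}$ and $\mathcal{T}$, exploit the fact that \eqref{cdi} holds for \emph{every} $\nu>0$ (equivalently, after checking $\Gamma_2^Z(f)\ge 0$, the optimized inequality $\Gamma_2(f)+2\sqrt{\kappa\,\Gamma(f)\,\Gamma_2^Z(f)}\ge \tfrac1d(Lf)^2+\rho_1\Gamma(f)+\rho_2\Gamma^Z(f)$), and then test against functions whose first- and second-order jets at a fixed point are prescribed independently. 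You have also correctly located the only genuinely delicate point: one cannot read off $\mathcal{R}$ and $\mathcal{T}$ by naive coefficient matching because of the discarded nonnegative Hessian squares, and the cure is precisely the free prescription of jets (e.g., all second-order horizontal and mixed derivatives equal to zero at $x$ kills the Hessian, $Lf$, $\Gamma_2^Z(f)$ and the cross terms, leaving $\Gamma_2(f)(x)=\mathcal{R}(f)(x)$, whence the first bound follows by letting $\nu\to\infty$; the torsion bound then comes from letting the mixed derivatives range freely and imposing nonnegativity of the resulting quadratic in those variables and in $\nu$, i.e., a discriminant condition).

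That said, what you have written is a plan rather than a proof: the two load-bearing steps --- writing down the actual Bochner identities for $\Gamma_2$ and $\Gamma_2^Z$ in a local adapted frame, and verifying that the relevant jet data can indeed be realized independently (note the mild constraint that the antisymmetric part of $X_iX_jf$ is determined by first-order data, which is harmless because only the symmetrized Hessian enters) --- are announced but not carried out, and it is exactly there that all the work of \cite{BG1} lives. Also, your remark that the torsion bound ``emerges from saturating the Cauchy--Schwarz step'' should be made precise as the discriminant condition just described; as stated it is a heuristic, not an argument. So: right approach, correct identification of the obstacle, but the proof itself remains to be written.
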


\subsection{Carnot groups of step two}\label{SS:carnot}

Carnot groups of step 2 provide  a natural reservoir of sub-Riemannian manifolds with transverse symmetries. 
Let $\mathfrak{g}$ be a graded nilpotent Lie
algebra of step two.  This means that $\mathfrak{g}$ admits a splitting $\mathfrak g = V_1 \oplus V_2$, where $[V_1,V_1] = V_2$, and $[V_1,V_2]=\{0\}$. We endow $\mathfrak{g}$ with an inner product $\langle\cdot, \cdot \rangle$ with respect to which the decomposition $V_1 \oplus V_2$ is orthogonal.
  We denote by $e_1,...,e_d$  an orthonormal basis of $V_1$ and by $\varepsilon_1,...,\varepsilon_\di$ an orthonormal basis of $V_2$. Let $\bG$ be the connected and simply connected graded nilpotent Lie group associated with $\mathfrak{g}$. 
  Left-invariant vector fields in $V_2$ are seen to be transverse sub-Riemannian Killing vector fields of the horizontal distribution given by $V_1$.  The geometric assumptions  of the previous section are thus satisfied.

\begin{proposition}\label{P:carnotCD}
Let $\bG$ be a Carnot group of step two, with $d$ being the dimension of the horizontal layer of its Lie algebra. Then, $\bG$ satisfies the generalized curvature-dimension inequality \emph{CD}$(0,\rho_2, \kappa, d)$ (with respect to any sub-Laplacian $L$ on $\bG$),  
with $\rho_2>0$  and $\kappa\ge 0$ which solely depend on $\bG$. 
\end{proposition}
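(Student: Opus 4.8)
The plan is to argue exactly as in the three–dimensional computation leading to Proposition \ref{P:cdmodels}, now carried out with an arbitrary number of horizontal and vertical directions; alternatively one may simply invoke Theorem \ref{T:cd}, since the left–invariant vector fields spanning $V_2$ are transverse sub-Riemannian Killing fields and the geometric bounds \eqref{riccibounds} reduce, for a nilpotent group, to a pair of algebraic estimates on the structure constants. I will present the direct route. Fix a left–invariant orthonormal frame $X_1,\dots,X_d$ of $V_1$ and $Z_1,\dots,Z_\di$ of $V_2$, write the step–two bracket relations as $[X_i,X_j]=\sum_\ell\omega_{ij}^\ell Z_\ell$, together with $[X_i,Z_\ell]=0$ and $[Z_k,Z_\ell]=0$, and take $L=\sum_i X_i^2$ and the canonical vertical form $\Gamma^Z(f)=\sum_\ell (Z_\ell f)^2$. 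Since every coefficient is constant in exponential coordinates, all quantities below are left–invariant and the constants produced will depend only on $\bG$.

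First I would compute the two second–order forms. Because $[X_i,Z_\ell]=0$ forces $[L,Z_\ell]=0$, the vertical term collapses to $\Gamma_2^Z(f)=\sum_{i,\ell}(X_iZ_\ell f)^2\ge 0$, exactly as in the model case. For $\Gamma_2$, a direct computation using the bracket relations (which only enter through $[X_i^2,X_j]=2\sum_\ell\omega_{ij}^\ell Z_\ell X_i$) gives
\[
\Gamma_2(f)=\sum_{i,j}(X_iX_jf)^2+2\sum_{i,j,\ell}\omega_{ij}^\ell\, X_jf\, X_iZ_\ell f,
\]
the second sum being the only contribution of the noncommutativity.

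The heart of the matter is to split $\sum_{i,j}(X_iX_jf)^2$ into its symmetric and antisymmetric parts in $(i,j)$. The symmetric part dominates its diagonal, giving $\sum_i (X_i^2f)^2\ge \frac1d(Lf)^2$ by Cauchy–Schwarz. The antisymmetric part equals $\tfrac14\sum_{i,j}\big(\sum_\ell\omega_{ij}^\ell Z_\ell f\big)^2$, a quadratic form in $(Z_1f,\dots,Z_\di f)$ with matrix $N_{\ell\ell'}=\tfrac14\sum_{i,j}\omega_{ij}^\ell\omega_{ij}^{\ell'}$, whose smallest eigenvalue $\rho_2$ yields the lower bound $\rho_2\,\Gamma^Z(f)$. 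Meanwhile the cross term is controlled by Young's inequality: for every $\nu>0$ its absolute value is at most $\nu\,\Gamma_2^Z(f)+\frac1\nu\sum_{i,\ell}\big(\sum_j\omega_{ij}^\ell X_jf\big)^2$, and the last sum is a quadratic form in $(X_1f,\dots,X_df)$ whose largest eigenvalue $\kappa$ gives the bound $\frac\kappa\nu\,\Gamma(f)$. Assembling these estimates turns the identity for $\Gamma_2(f)$ into $\Gamma_2(f)+\nu\,\Gamma_2^Z(f)\ge \frac1d(Lf)^2+\rho_2\,\Gamma^Z(f)-\frac\kappa\nu\,\Gamma(f)$, which is precisely CD$(0,\rho_2,\kappa,d)$.

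The one genuinely nontrivial point — and the step I expect to be the main obstacle — is to verify that $\rho_2>0$, i.e. that $N$ is positive \emph{definite} rather than merely semidefinite; the flatness of the group makes $\rho_1=0$ automatic, but strict positivity of $\rho_2$ is what Definition \ref{D:cdi} demands. This is exactly where the step–two, bracket–generating hypothesis $[V_1,V_1]=V_2$ enters: surjectivity of the bracket onto $V_2$ is, by duality, equivalent to injectivity of the map $c=(c_\ell)\mapsto\big(\sum_\ell c_\ell\omega_{ij}^\ell\big)_{i,j}$, so that $\sum_{\ell,\ell'}N_{\ell\ell'}c_\ell c_{\ell'}=\tfrac14\sum_{i,j}\big(\sum_\ell c_\ell\omega_{ij}^\ell\big)^2$ vanishes only for $c=0$. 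Hence $N$ has trivial kernel, its least eigenvalue $\rho_2>0$ depends only on $\bG$, and the proof is complete.
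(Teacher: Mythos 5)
Your argument is correct, but it follows a different route from the one the paper takes. The paper obtains Proposition \ref{P:carnotCD} as a corollary of the general Theorem \ref{T:cd}: the left-invariant vector fields in $V_2$ are transverse sub-Riemannian Killing fields, so $\bG$ is a sub-Riemannian manifold with transverse symmetries, and one then checks the geometric bounds \eqref{riccibounds} (with $\rho_1=0$) for the canonical connection's Ricci tensor $\mathcal R$ and torsion $\mathcal T$. You instead give a direct, self-contained computation that generalizes the paper's own treatment of the three-dimensional models in Proposition \ref{P:cdmodels} to arbitrary $d$ and $\di$: the identities $\Gamma_2^Z(f)=\sum_{i,\ell}(X_iZ_\ell f)^2$ and $\Gamma_2(f)=\sum_{i,j}(X_iX_jf)^2+2\sum_{i,j,\ell}\omega_{ij}^\ell X_jf\,X_iZ_\ell f$ are right, the symmetric/antisymmetric splitting with Cauchy--Schwarz on the diagonal gives $\frac1d(Lf)^2$, and the Young-inequality treatment of the cross term produces exactly the $-\frac{\kappa}{\nu}\Gamma(f)+\nu\Gamma_2^Z(f)$ structure of \eqref{cdi}. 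Your identification of the one delicate point is also exactly right: the strict positivity $\rho_2>0$ of the smallest eigenvalue of $N_{\ell\ell'}=\frac14\sum_{i,j}\omega_{ij}^\ell\omega_{ij}^{\ell'}$ is equivalent, by the duality argument you give, to $[V_1,V_1]=V_2$. What your approach buys is explicit constants ($\rho_2$ and $\kappa$ as extremal eigenvalues of matrices built from the structure constants) and complete transparency about where bracket generation is used; what the paper's approach buys is that the same statement comes packaged with the converse (Theorem \ref{P:tres_beau}) and with the geometric interpretation of $\rho_1,\rho_2,\kappa$ as bounds on $\mathcal R$ and $\mathcal T$. One cosmetic caveat: since the proposition asserts the inequality for \emph{any} sub-Laplacian, you should note that a general sub-Laplacian corresponds to a different choice of inner product on $V_1$, for which the identical computation applies after replacing your frame by an orthonormal one for that metric.
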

 
In particular, in our framework, every Carnot group of step two is a \textit{sub-Riemannian manifold with nonnegative Ricci tensor}.

\subsection{CR Sasakian manifolds}\label{SS:sasakian}

Another interesting class of sub-Riemannian manifolds with transverse symmetries is given by the class of CR Sasakian manifolds. For these manifolds one has the following result, established in \cite{BG1}.

\begin{theorem}\label{T:sasakian}
Let $\M$ be a Sasakian manifold, having real dimension $2n+1$. Assume that the Tanaka-Webster Ricci tensor is bounded from below by $\rho_1 \in \mathbb{R}$ on smooth functions, that is for every $f\in C^\infty(\bM)$ 
\[
\emph{Ric}(\nabla_\mathcal{H} f , \nabla_\mathcal{H} f ) \ge \rho_1 \|  \nabla_\mathcal{H} f \|^2.
\]
Then, $\bM$ satisfies the generalized curvature-dimension inequality  \emph{CD}$(\rho_1,\frac{n}{2},1,2n)$.
\end{theorem}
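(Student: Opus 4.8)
The plan is to place a Sasakian manifold within the transverse-symmetry framework of Section \ref{SS:subR} and then invoke Theorem \ref{T:cd}, so that the entire statement reduces to verifying the two geometric bounds \eqref{riccibounds} with the specific constants $\rho_2 = \frac n2$, $\kappa = 1$ and $d = 2n$. First I would set up the structure: for a Sasakian manifold $(\M,\theta,J)$ of real dimension $2n+1$, the horizontal distribution $\mathcal H = \ker\theta$ has dimension $d = 2n$ and carries the complex structure $J$ together with the Levi form as fiberwise inner product, while the Reeb vector field $T$ spans a one-dimensional vertical distribution $\mathcal V$, so the vertical dimension is $\di = 1$. The Sasakian hypothesis guarantees that $T$ is an infinitesimal isometry preserving $\mathcal H$, i.e.\ a sub-Riemannian Killing field, so the geometric assumptions of Section \ref{SS:subR} hold and the canonical connection $\nabla$ coincides with the Tanaka-Webster connection. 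With $\Gamma^Z(f) = (Tf)^2$, Theorem \ref{T:cd} then reduces everything to showing (a) $\mathcal R(f) \ge \rho_1\Gamma(f) + \tfrac n2\Gamma^Z(f)$ and (b) $\mathcal T(f) \le \Gamma(f)$.

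For bound (a) I would use the Bochner-type identity for the Tanaka-Webster connection established in \cite{BG1} to split $\mathcal R(f)$ into a purely horizontal contribution, equal to the Tanaka-Webster Ricci curvature evaluated on $\nabla_{\mathcal H}f$, plus a nonnegative vertical contribution arising from the interaction between the horizontal Hessian and the Reeb direction. The hypothesis $\Ri(\nabla_{\mathcal H}f,\nabla_{\mathcal H}f) \ge \rho_1\|\nabla_{\mathcal H}f\|^2 = \rho_1\Gamma(f)$ supplies the $\rho_1\Gamma(f)$ term directly. The vertical contribution is where the value $\rho_2 = \frac n2$ is produced: choosing a local orthonormal horizontal frame adapted to $J$ and organized into the $n$ symplectic pairs $(X_\alpha, JX_\alpha)$, each bracket $[X_\alpha, JX_\alpha]$ has a nonzero Reeb component, and the antisymmetric part of the horizontal Hessian over each pair contributes exactly $\tfrac12(Tf)^2 = \tfrac12\Gamma^Z(f)$, exactly as in the three-dimensional identity $(XYf)^2+(YXf)^2 = \tfrac12(XYf+YXf)^2 + \tfrac12\Gamma^Z(f)$ used in Proposition \ref{P:cdmodels}. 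Summing over the $n$ pairs yields the coefficient $\frac n2$.

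For bound (b) I would compute $\mathcal T(f)$ explicitly using the defining feature of the Sasakian class, namely the vanishing of the pseudo-hermitian (Tanaka-Webster) torsion; this annihilates the terms that would otherwise obstruct the comparison, leaving a structural term controlled by the normalization of $d\theta$ on $\mathcal H$, which gives $\mathcal T(f) \le \Gamma(f)$, i.e.\ $\kappa = 1$. As a consistency check, the case $n=1$ must reproduce Proposition \ref{P:cdmodels}, where indeed $(\rho_2,\kappa,d) = (\tfrac12,1,2)$.

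I expect the main obstacle to be the bookkeeping in the Bochner identity that isolates the exact constants $\rho_2=\frac n2$ and $\kappa=1$: one must track the cross-terms between the horizontal Hessian of $f$ and its Reeb-differentiated counterparts $X_\alpha T f$ and $JX_\alpha\, Tf$. These terms are not sign-definite, and the definition of the tensor $\mathcal T$ in \cite{BG1} is precisely what collects them; bounding this collection uniformly across the $n$ symplectic pairs — the mechanism underlying the parameter $\nu$ and the term $\kappa/\nu$ in \eqref{cdi}, which is packaged inside Theorem \ref{T:cd} — while using the torsion-free Sasakian condition to discard the remainder, is the technical heart of the computation. Once it is carried out, Theorem \ref{T:cd} delivers CD$(\rho_1,\frac n2,1,2n)$.
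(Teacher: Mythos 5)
Your proposal follows essentially the same route the paper indicates: Theorem \ref{T:sasakian} is stated without proof and attributed to \cite{BG1}, but it is placed squarely within the transverse-symmetry framework of Section \ref{SS:subR}, so the intended argument is exactly yours --- realize the Reeb field as the vertical Killing direction, verify the bounds \eqref{riccibounds} with $\rho_2=\frac{n}{2}$, $\kappa=1$, $d=2n$, and invoke Theorem \ref{T:cd}. Your accounting of where each constant comes from (the $n$ symplectic pairs $(X_\alpha,JX_\alpha)$ producing $\frac{n}{2}\Gamma^Z(f)$, the isometry $J$ giving $\mathcal{T}(f)=\|J\nabla_{\mathcal H}f\|^2=\Gamma(f)$ hence $\kappa=1$, and the Hessian trace inequality giving $d=2n$) matches the computation in \cite{BG1} and correctly reduces to Proposition \ref{P:cdmodels} when $n=1$.
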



\begin{thebibliography}{99}

\bibitem[A]{A}
A. Agrachev, \emph{Geometry of optimal control problems and Hamiltonian systems}, SISSA, preprint series, 42/2005/M.

\bibitem[AL]{AL} A. Agrachev \& P. Lee, \emph{Generalized Ricci curvature bounds on three-dimensional contact sub-Riemannian manifolds}, Arxiv preprint, 2009.


\bibitem[ABCFGMRS]{sobolog}  C. An\'e,  S. Blach\`ere,  D. Chafa\"i, P. Foug\`eres, I. Gentil, F. Malrieu, C. Roberto, G. Scheffer, \emph{ Sur les in\'egalit\'es de Sobolev logarithmiques}. (French) [Logarithmic Sobolev inequalities] With a preface by Dominique Bakry and Michel Ledoux. Panoramas et Synth\`eses, 10. Soci\'et\'e Math\'ematique de France, Paris, 2000. xvi+217


\bibitem[B1]{bakry-CRAS} 
D. Bakry, \emph{Un crit\`ere de non-explosion pour certaines diffusions sur une vari\'et\'e riemannienne compl\`ete.}C. R. Acad. Sci. Paris S\'er. I Math. 303 (1986), no. 1, 23Ð26.


\bibitem[B2]{B}
\bysame, \emph{L'hypercontractivit\'e et son utilisation en
th\'eorie des semigroupes}, Ecole d'Et\'e de Probabilites de
St-Flour, Lecture Notes in Math, (1994).

\bibitem[BB]{bakry-baudoin} D. Bakry, F. Baudoin, M. Bonnefont \&
B. Qian, \emph{Subelliptic Li-Yau estimates on three dimensional
model spaces},  Potential Theory and Stochastics in Albac, Aurel Cornea Memorial Volume (2009).


\bibitem[BG1]{BG1}
F. Baudoin \& N. Garofalo, \emph{Curvature-dimension inequalities and Ricci lower bounds for sub-Riemannian manifolds with transverse symmetries}, preprint, 2011.

\bibitem[BG2]{BG2}
\bysame, \emph{Perelman's entropy and doubling property on Riemannian manifolds},  J. Geom. Anal. \textbf{21}~(2011), no. 4, 1119-1131.

\bibitem[BG3]{BG3}
\bysame, \emph{A note on the boundedness of Riesz transform for some subelliptic operators}, 

\bibitem[BBG]{BBG}
F. Baudoin, M. Bonnefont \& N. Garofalo, \emph{A sub-Riemannian curvature-dimension inequality, volume doubling property and the Poincar\'e inequality}, preprint, 2011.

\bibitem[BBGM]{BBGM}
F. Baudoin, M. Bonnefont, N. Garofalo \& I. Munive, \emph{Volume and distance comparison theorems for  sub-Riemannian manifolds}, preprint, 2012.


\bibitem[CY]{Cao-Yau} H.D. Cao, S.T. Yau, \emph{Gradient estimates, Harnack inequalities and estimates for
heat kernels of the sum of squares of vector fields}, Mathematische
Zeitschrift, \textbf{211}~(1992), 485-504.

\bibitem[FS]{FS}
E. B. Fabes \& D. Stroock, \emph{A new proof of Moser's parabolic Harnack inequality using the old ideas of Nash}, Arch. Rational Mech. Anal. \textbf{96}~(1986), no. 4, 327--338.


\bibitem[FP]{FP}
C. Fefferman \& D. H. Phong, \emph{Subelliptic eigenvalue problems},
Conference on harmonic analysis in honor of Antoni Zygmund, Vol. I,
II (Chicago, Ill., 1981),  590--606, Wadsworth Math. Ser.,
Wadsworth, Belmont, CA, 1983.

\bibitem[Gri1]{Gri} 
A. A. Grigor'yan,  \emph{The heat equation on noncompact Riemannian manifolds}, (Russian) Mat. Sb. \textbf{182}~(1991), no. 1, 55--87; translation in Math. USSR-Sb. \textbf{72}~(1992), no. 1, 47--77.

\bibitem[Gri2]{Gri2}
\bysame, \emph{Heat Kernel and Analysis on Manifolds}, Amer. Math. Soc., Internat. Press, Studies in Adv. Math. Vol. 47, 2009.

\bibitem[GSC]{GSC}
P. Gyrya \& L. Saloff-Coste, \emph{Neumann and Dirichlet heat kernels in inner uniform domains}, preliminary notes.



\bibitem[Ha]{Ha}
J. Hadamard, \emph{Extension \`a l'\'equation de la chaleur d'un th\'eor\`eme de A. Harnack} (French), Rend. Circ. Mat. Palermo (2) \textbf{3}~(1954), 337-346 (1955). 

\bibitem[Ho]{Ho}
L. H\"ormander, \emph{Hypoelliptic second-order differential
equations}, Acta Math., \textbf{119}~(1967), 147-171.

\bibitem[Hu]{Hu}
K. Hughen, \emph{The geometry of sub-Riemannian three-manifolds}, 1995, Duke University preprint server.



\bibitem[J]{J}
D. S. Jerison, \emph{The Poincar\'e inequality for vector fields
satisfying H\"ormander's condition}, Duke Math. J.,
\textbf{53}~(1986), 503-523.



\bibitem[KS]{KS3} 
S. Kusuoka \& D. Stroock, \emph{Applications of the Malliavin calculus. III.},
J. Fac. Sci. Univ. Tokyo Sect. $I$A Math. \textbf{34}~(1987), no. 2, 391--442. 

\bibitem[LY]{LY}
P. Li \& S. T. Yau, \emph{On the parabolic kernel of the
Schr\"odinger operator}, Acta Math., \textbf{156}~(1986), 153-201.

\bibitem[P]{P}
B. Pini, \emph{Sulla soluzione generalizzata di Wiener per il primo problema di valori al contorno nel caso parabolico}, (Italian) Rend. Sem. Mat. Univ. Padova \textbf{23}~(1954), 422-434.

\bibitem[Ru]{rumin} M. Rumin, \emph{Formes diff\'erentielles sur les vari\'et\'es de contact}.  (French) [Differential forms on contact manifolds]
J. Differential Geom. \textbf{39}~(1994), no. 2, 281-330.


\bibitem[SC]{SC}
L. Saloff-Coste, \emph{A note on Poincar\'e, Sobolev, and Harnack inequalities}, Internat. Math. Res. Notices 1992, no. 2, 27--38.

\bibitem[S]{Strichartz} R. Strichartz, \emph{Sub-Riemannian geometry},
Journ. Diff. Geom.,  \textbf{24}~(1986), 221-263.

\bibitem[St1]{St1}
K.T. Sturm, \emph{Analysis on local Dirichlet spaces. I. Recurrence, conservativeness and $L\sp p$-Liouville properties},  J. Reine Angew. Math.,  \textbf{456}~(1994), 173--196.

\bibitem[St2]{St2} 
\bysame, \emph{Analysis on local Dirichlet spaces. II. Upper Gaussian estimates for the fundamental solutions of parabolic equations},
Osaka J. Math., \textbf{32}~(1995), no. 2, 275--312.

\bibitem[St3]{St3} 
\bysame, \emph{Analysis on local Dirichlet spaces. III. The parabolic Harnack inequality},
J. Math. Pures Appl. (9) \textbf{75}~(1996), no. 3, 273--297. 



\bibitem[Y2]{Yau2} S.T. Yau,  \emph{On the heat kernel of a complete Riemannian manifold}.
J. Math. Pures Appl. (9) 57 (1978), no. 2, 191--201.



\end{thebibliography}
\end{document}